\newcommand{\norm}[1]{\Vert #1\Vert}
\newcommand{\mbr}{\mathbb{R}}
\newcommand{\mbn}{\mathbb{N}}
\newcommand{\gis}{G_\mathrm{IS}}
\newcommand{\gtis}{G_\mathrm{tIS}}
\newcommand{\ggis}{\tilde{G}_\mathrm{IS}}
\newcommand{\mbe}[1]{\mathbb{E}\left[#1\right]}
\newcommand{\simiid}{\stackrel{\text{iid}}{\sim}}
\newcommand{\R}{\mathbb{R}}
\setlist[enumerate]{leftmargin=.5in}
\setlist[itemize]{leftmargin=.5in}
\crefname{hypothesis}{Hypothesis}{Hypotheses}
\title{Quasi-Monte Carlo and importance sampling methods for Bayesian inverse problems\thanks{Submitted to the editors DATE.
\funding{This work was funded by the National Natural Science Foundation of China (No. 12071154 and 
72071119), and the Guangdong Basic and Applied Basic Research Foundation (No. 2021A1515010275).}}}
\author{Zhijian He\thanks{School  of  Mathematics,   South  China  University  of  Technology,Guangzhou 510641, People's Republic of China
  (\email{hezhijian@scut.edu.cn}).} \and Hejin Wang\thanks{Corresponding author. Department of Mathematical Sciences, Tsinghua University, Beijing 100084,People's Republic of China 
  (\email{wanghj20@mails.tsinghua.edu.cn}).}
\and Xiaoqun Wang \thanks{Department of Mathematical Sciences, Tsinghua University, Beijing 100084,People's Republic of China 
  (\email{wangxiaoqun@mail.tsinghua.edu.cn}).}
}
\newcommand*{\addFileDependency}[1]{
  \typeout{(#1)}
  \@addtofilelist{#1}
  \IfFileExists{#1}{}{\typeout{No file #1.}}
}
\newcommand*{\myexternaldocument}[1]{%
    \externaldocument{#1}%
    \addFileDependency{#1.tex}%
   \addFileDependency{#1.aux}%
}
\begin{document}

\maketitle

\begin{abstract}
Importance Sampling (IS), an effective variance reduction strategy in Monte Carlo (MC) simulation, is frequently utilized for Bayesian inference and other statistical challenges. Quasi-Monte Carlo (QMC) replaces the random samples in MC with low discrepancy points and has the potential to substantially enhance error rates. In this paper, we integrate IS with a randomly shifted rank-1 lattice rule, a widely used QMC method, to approximate posterior expectations arising from Bayesian Inverse Problems (BIPs) where the posterior density tends to concentrate as the intensity of noise diminishes. Within the framework of weighted Hilbert spaces, we first establish the convergence rate of the lattice rule for a large class of unbounded integrands. This method extends to the analysis of QMC combined with IS in BIPs. Furthermore, we explore the robustness of the IS-based randomly shifted rank-1 lattice rule by determining the quadrature error rate with respect to the noise level. The effects of using Gaussian distributions and $t$-distributions as the proposal distributions on the error rate of QMC are comprehensively investigated. We find that the error rate may deteriorate at low intensity of noise when using improper proposals, such as the prior distribution. To reclaim the effectiveness of QMC, we propose a new IS method such that the lattice rule with $N$ quadrature points achieves an optimal error rate close to $O(N^{-1})$, which is insensitive to the noise level. Numerical experiments are conducted to support the theoretical results.

\end{abstract}

\begin{keywords}
  Importance Sampling, Lattice Rule, Bayesian Inverse Problems
\end{keywords}

\begin{AMS}
  35R60, 62F15, 65C05, 65N21
\end{AMS}

\section{Introduction}\label{sec1}

In recent years, Bayesian inference has become a key method for solving inverse problems and quantifying uncertainties \cite{ghanem2017handbook}. It utilizes the Bayesian formula to update the prior distribution of unknown parameters based on noisy observed data, thereby enhancing model training. This method is especially effective in diverse fields such as engineering and biological systems, where it plays a crucial role in handling uncertainties associated with complex problems.

One specific form of these complex problems can be considered as inferring the vector \(z \in \mathbb{R}^s\) from the vector \(y \in \mathbb{R}^J\). Here, \(z\) and \(y\) are related through a given response operator \(\mathcal{G}\). In this context, \(\mathbb{R}^s\) denotes the parameter space and \(\mathbb{R}^J\) denotes the data space. This scenario presents several challenges. First, the dimensionality of the parameter space often differs from that of the observation data space, leading to an underdetermined system when \(s > J\), where the number of equations is smaller than the number of unknowns. Second, while assuming that \(\mathcal{G}\) maps \(\mathbb{R}^s\) to a proper subset of \(\mathbb{R}^J\) and has a unique inverse as a map from the image of \(\mathcal{G}\) to \(\mathbb{R}^s\), the presence of noise can result in \(y\) not belonging to the image of \(\mathcal{G}\). This issue complicates the inversion of \(\mathcal{G}\) on the data \cite{ghanem2017handbook}. The model in practice is often represented as 
\begin{equation}\label{eq:fowardmodel}
   y = \mathcal{G}(z) + \eta, 
\end{equation}
where \(\eta \in \mathbb{R}^{J}\) signifies the noise. Bayesian inference provides a robust framework for dealing with these complexities and uncertainties.

From a probabilistic perspective, both the data and noise are considered as random variables. We assume the distribution of the noise in advance. The Bayesian solution to the inverse problem is the posterior probability distribution of the unknown parameter \(z\), given the observed data \(y\), denoted as \(p(z|y)\).
The Bayesian approach operates as follows. Given the prior distribution \(\pi_{0}\) of the parameter, the posterior distribution \(\pi\) can be calculated using Bayes' rule as
\begin{displaymath}
    \pi(z) := \frac{1}{I}\exp(-\Psi(z;y))\pi_{0}(z),
\end{displaymath}
where \(I\) denotes the normalization constant 
\begin{displaymath}
    I := \int_{\mathbb{R}^{s}}\exp(-\Psi(z;y))\pi_0(z)dz,
\end{displaymath}
and \(\Psi:\mathbb{R}^s \times \mathbb{R}^{J} \longrightarrow \mathbb{R}\) is the potential, for a given observed data, \(y\). Note that \(\Psi(\cdot,y)\) often refers to the negative log-likelihood. The integration of a function \( f: \mathbb{R}^s \to \mathbb{R} \) with respect to the posterior distribution, is formulated as 
\begin{equation}\label{2}
  \mathbb{E}_\pi [f(Z)] = \frac{\int_{\mathbb{R}^{s}}f(z)\exp(-\Psi(z;y))\pi_0(z)dz}{\int_{\mathbb{R}^{s}}\exp(-\Psi(z;y))\pi_0(z)dz}.
\end{equation}
Monte Carlo (MC) methods and their variants, such as Markov chain Monte Carlo (MCMC), have been extensively utilized for estimating posterior densities. These methods are well-documented in the literature. Recent developments are highlighted in references \cite{bib15ernst2015analysis,bib39schillings2017analysis}. Furthermore, a comprehensive survey of the statistical theory for operator equations in function spaces is available in \cite{bib18gine2021mathematical}. Despite the widespread use of these MC-based methods, they typically offer a root mean-squared error (RMSE) rate of $O(N^{-1/2})$, where $N$ is the sample size. In recent years, there have been significant advancements in developing higher-order numerical methods for computing Bayesian estimates in  Partial Differential Equation-constrained forward problems with distributed uncertain inputs from function spaces. This progress began with the exploration of uniform priors, as documented in \cite{bib40schwab2012sparse}, and has included Gaussian priors. Notable contributions in this field involve the implementation of Smolyak-based quadrature \cite{bib8chen2017hessian} and the multilevel MC and quasi-Monte Carlo (QMC) quadratures \cite{bib38scheichl2017quasi}. These methods, which build upon and extend traditional MC techniques, represent a significant shift toward more sophisticated computational strategies in Bayesian inference. They underscore the ongoing efforts to enhance the efficiency and accuracy of solving inverse problems, especially in complex and high-dimensional data spaces.

While MC methods and their extensions are fundamental in Bayesian inference, they encounter challenges, particularly when dealing with concentrated posterior measures due to large datasets or small noise levels \cite{bib1}. Typically, the noise in the forward model \eqref{eq:fowardmodel} follows $\eta \sim N(0,\Gamma_n)$, with its variance matrix $\Gamma_n$ decaying as a result of incorporating a scaled noise covariance, $\Gamma_n= n^{-1}\Gamma$. 

Due to the concentration effect of the posterior measure, plain MC sampling based on the prior distribution may become inefficient for large $n$. A larger $n$ indicates low intensity noise in the problem, which leads to reduced uncertainty in the model. Hence, the demand for efficient numerical integration algorithms suitable for low-noise scenarios is highly relevant. The introduction of importance sampling (IS) marks a significant advance in addressing these challenges. As a variance reduction technique, IS has been widely studied and implemented in Bayesian Inverse Problems (BIPs). It effectively reweights the sampling process to focus on the more critical regions of the parameter space. This significantly improves the efficiency and accuracy of MC methods under the concentration effect, as detailed in  \cite{bib1}. The integration of IS with traditional MC methods signifies a notable advance in the development of computational methods for Bayesian inference.

Beyond the advancements achieved through IS, the QMC method emerges as a powerful alternative to traditional MC methods. By employing low discrepancy such as lattice point sets or scrambled nets, QMC methods offer deterministic counterparts to MC methods and are capable of achieving faster convergence rates \cite{caflisch1998monte,niederreiter1992random}. Randomized quasi-Monte Carlo (RQMC) provides an unbiased estimator, and its empirical variance can be easily utilized to evaluate the efficiency of RQMC. However, combining RQMC with IS does not always lead to enhanced outcomes, as demonstrated by He et al. \cite{bib2}. 

This paper aims to explore the integration of using the randomly shifted rank-1 lattice rule, a widely used RQMC method, with IS. Recently, Wang and Zheng \cite{wang2024randomly} analyzed the convergence rate of randomly shifted rank-1 lattice rule combined with IS in some statistical and financial models. This paper particularly focuses on challenges posed by concentrated posterior measures. We employ the lattice rule due to its solid theoretical foundation in the reproducing kernel Hilbert space (RKHS), which enables the analysis of the convergence order in relation to noise levels. This research is driven by the need to understand and improve the convergence rates of lattice rule under various IS proposals and their relation to noise levels. Our goal is to provide guidelines for selecting IS densities that enhance the efficiency and accuracy of numerical integration in BIPs settings. In this paper, we aim to develop methods that are robust against the concentration effect and effective in handling large datasets.

Recent years have seen developments in the application of IS and QMC in BIPs. Scheichl et al. \cite{bib38scheichl2017quasi} provided a convergence and complexity analysis of the randomly shifted lattice rule for computing posterior expectations in elliptic inverse problems, employing the prior distribution as the sampling proposal. Herrmann et al. \cite{herrmann2021quasi} focused on Besov priors for admissible uncertain inputs and established conditions for achieving dimension-independent convergence rates. However, these papers did not investigate the effect of noise levels nor the general proposals for IS. Our work relates closely to that of Schillings et al. \cite{bib39schillings2017analysis}, who analyzed the effects of noise levels on MC methods combined with Laplace-based and prior-based IS in BIPs. QMC integration based on the Laplace approximation of the posterior was also studied in \cite{bib39schillings2017analysis}. Unlike \cite{bib39schillings2017analysis}, we study the error bounds of QMC with more general IS proposals, such as Gaussian or $t$-distribution proposals, including Laplace-based and prior-based IS as special cases. We demonstrate that QMC with a proper IS proposal is robust with respect to the concentration of the posterior, achieving an optimal error rate close to $O(N^{-1})$ for a sample size of $N$. Additionally, we provide strategies for selecting such proposals to reclaim the efficiency of QMC.

The rest of this paper is organized as follows. Section~\ref{sec3} provides the background on MC and IS. Section \ref{sec2} presents the foundational theory of the lattice rule. Section \ref{sec:gaussian} discusses the convergence rates of the lattice rule under various IS densities based on Gaussian proposals within the BIP framework and explores their asymptotic properties concerning the noise levels. Results for IS using $t$-distribution proposals are provided in Section~\ref{sec:tdist}. Experimental results are presented in Section \ref{sec4}. Section~\ref{sec:conclusion} concludes the paper.

\section{Problem formulation}\label{sec3}

In this paper, we focus on the concentration effect due to a small noise of the model. We assume that the noise  in the forward model \eqref{eq:fowardmodel}  admits $\eta \sim N(0,\Gamma_n)$ and its variance matrix decays by incorporating a scaled noise covariance $\Gamma_n$ that is inversely proportional to the number of observations $n$ made so far, i.e., $\Gamma_n = n^{-1}\Gamma$. The likelihood of the model \eqref{eq:fowardmodel} is then 
$$p(y|z) = (2\pi n)^{-\frac{J}{2}}|\Gamma|^{-\frac{1}{2}}\exp\left\{-\frac{n}{2}\norm{y-\mathcal{G}(z)}^2_\Gamma\right\},$$
where $\norm{a}^2_\Gamma:=a^\top\Gamma^{-1}a$.
To delve into the impact of noise level, we fix data $y$ and take the negative log-likelihood (omitting a constant term) $\Psi(z;y)=\frac{n}{2}\norm{y-\mathcal{G}(z)}^2_\Gamma=n\Psi(z)$, where $\Psi(z):=\frac{1}{2}\norm{y-\mathcal{G}(z)}^2_\Gamma$. To simplify the notation, the dependence of $\Psi(z)$ on data $y$ has been omitted. The posterior expectation of interest is therefore reformulated as
\begin{equation}\label{3}
   \mathbb{E}_{\pi}[f(z)] = \frac{\int_{\R^s} f(z) \exp(-n \Psi(z)) \pi_0(z) \, dz}{\int_{\R^s} \exp(-n \Psi(z)) \pi_0(z) \, dz} =: \frac{T_1}{T_2},
\end{equation}
where $\pi_0$ and $\pi$ denote the prior and posterior distributions, respectively. This paper primarily focuses on the setting of Gaussian prior $\pi_0 =N(\mu_0, \Sigma_0)$.

To estimate $\mathbb{E}_{\pi}[f(z)]$, we can employ MC methods based on the prior distribution, which allows for direct sampling. However, for large $n$, most of the sample points generated from the prior distribution will be located in the regions of low posterior, making the sampling inefficient.
IS is widely employed to address the issue by using a proper sampling measure denoted by $q(x)$ that reflects the important region of the posterior. By a change of measure, the IS-based estimator for \eqref{3} is given by
\begin{equation}\label{31}
    \frac{\hat T_1}{\hat T_2}:=\frac{\frac{1}{N}\sum_{i=0}^{N-1} f(x_i)\exp(-n\Psi(x_i))w(x_i)}{\frac{1}{N}\sum_{i=0}^{N-1} \exp(-n\Psi(x_i))w(x_i)},
\end{equation}
where $x_i \simiid q(x)$ and $w(x)=\pi_0(x)/q(x)$.
Using the triangle inequality as in \cite{bib38scheichl2017quasi}, the mean squared error (MSE) of the ratio estimator can be bounded via
\begin{equation}\label{eq:boundedratio}
\mbe{\left(\frac{\hat T_1}{\hat T_2}-\frac{T_1}{T_2}\right)^2}\le \frac{2}{T_2^2}\biggl(\mathbb{E}[(\hat{T_1}-T_1)^2]+\mathbb{E}[(\hat{T}_1/\hat{T}_2)^2(\hat{T}_2-T_2)^2]\biggr).
\end{equation}
If $\hat{T}_1/\hat{T}_2$ is bounded, then the MSE of the ratio
estimator is bounded by the MSE of the numerator and denominator estimators in \eqref{31}. Therefore, our analysis will focus on the quadrature error of the numerator estimator $\hat{T}_1$. The analysis for the denominator estimator $\hat{T}_2$ follows straightforwardly by setting $f(z)\equiv 1$ in the numerator estimator.

A common strategy is to choose the proposal $q(x)$ from a specific family of distributions. We first focus on a multivariate Gaussian distribution $N(\mu,\Sigma)$ as a proposal for IS, with density given by
\begin{equation*}
    q(x;\mu,\Sigma) = (2\pi)^{-\frac{s}{2}}|\Sigma|^{-\frac{1}{2}}\exp\left\{-\frac{1}{2}\norm{x-\mu}_\Sigma^2\right\},
\end{equation*}
where $\mu \in \R^{s}$ and $\Sigma \in \R^{s\times s}$. Let $LL^{T} = \Sigma$, so that $X=\mu+LZ\sim N(\mu,\Sigma)$ for $Z\sim N(0,I_s)$. By a change of measure, the numerator in \eqref{3} can be represented by 
\begin{equation}\label{intergran}
   \mathbb{E}_{N(\mu,\Sigma)}[f(X)\exp(-n\Psi(X))\pi_0(X)/q(X;\mu,\Sigma)]=:\mathbb{E}_{N(0,I_s)}[\gis(Z)],
\end{equation}
where 
\begin{equation}\label{eq:gis}
 \gis(z)=f(x)\exp(-n\Psi(x))W(x)  \text{ with }x=\mu+Lz, 
\end{equation}
and the likelihood ratio 
\begin{align}\label{W(z)}
    W(x) &= \frac{\pi_0(x)}{q(x;\mu,\Sigma)}= \frac{q(x;\mu_0,\Sigma_0)}{q(x;\mu,\Sigma)} \notag\\
    &= \frac{|\Sigma|^{1/2}}{|\Sigma_0|^{1/2}}\exp\left\{\frac{1}{2}\norm{x-
    \mu}_{\Sigma}^2 - \frac{1}{2}\norm{x-
    \mu_0}_{\Sigma_0}^2\right\}\notag\\
    &=\frac{|\Sigma|^{1/2}}{|\Sigma_0|^{1/2}}\exp\left\{\frac{1}{2}x^T(\Sigma^{-1}-\Sigma_0^{-1})x +(\mu_0^T\Sigma_0^{-1}-\mu^T\Sigma^{-1})x +\frac{1}{2}(\norm{\mu}_{\Sigma}^2-\norm{\mu_0}_{\Sigma_0}^2)\right\}.
\end{align}

A direct way to choose the proposal is taking $\mu=\mu_0$ and $\Sigma=\Sigma_0$, referred as the prior-based IS (PriorIS), which may be inefficient when the posterior is far away from the prior. A better strategy is to choose a drift parameter $\mu$ that aligns with the mode of the optimal IS density $q_{opt}(z)\propto |f(z)|\exp(-n\Psi(z))\pi_0(z)$. This particular IS density is known as the optimal drift IS (ODIS) \cite{zhang2021efficient}. When $n$ is large, the mode of $q_{opt}(z)$ is close to the maximizer of the likelihood $\exp(-n\Psi(z))$. In the following, we take the drift parameter for ODIS as
\begin{equation}\label{eq:mustar}
\mu^* = \arg\max_{z\in \mbr^s} \exp(-n\Psi(z)) = \arg\min_{z\in \mbr^s} {\Psi(z)}.
\end{equation}
 We assume that $\mu^*$ is the global minimizer of $\Psi(z)$, and $\Psi(\mu^*)=0$, otherwise we can replace $\Psi(z)$ by $\Psi(z)-\Psi(\mu^*)$ in \eqref{3}.
The ODIS density is then given by
\begin{displaymath}
    q_{OD}(x) = q(x;\mu^*,\Sigma_0),
\end{displaymath}
which is independent of the noise level $n$.

Regarding Laplace IS (LapIS), we take the second order Taylor expansion of $\Psi$ at $\mu^*$, i.e,
\begin{align*}
 \Psi(z) \approx \Psi(\mu^*) + \frac{1}{2}(z-\mu^*)^{T}\nabla^2\Psi(\mu^*)(z-\mu^*).
\end{align*}
The LapIS density is then chosen as
\begin{align*}
    q_{Lap}(x) =q(x;\mu^*,n^{-1}\Sigma^*)\propto \exp\left\{-\frac{n}{2}(x-\mu^*)^T\nabla^2 \Psi(\mu^*)(x-\mu^*)\right\},
\end{align*}
where $\Sigma^*=(\nabla^2\Psi(\mu^*))^{-1}$.

\section{Preliminaries on lattice rules}\label{sec2}
In this section, we review the basic properties of the QMC method for estimating \eqref{intergran} and briefly revisit some theories about the RKHS. For more details, we refer to \cite{aronszajn1950theory,le2012multiscale}.

QMC methods are equal-weight quadrature rules for approximating integrals over the unit cube $(0,1)^s$. A necessary step in applying QMC methods to the integral \eqref{intergran} formulated over $\R^{s}$ is to transform the integral into the unit cube $(0,1)^s$. Consider a more general problem
\begin{equation}\label{8}
    I_{s,\phi}(G) := \int_{\R^s} G(z)\phi(z)\,dz = \int_{(0,1)^s}G\circ \Phi^{-1}(\mathtt{u})\, d\mathtt{u},
\end{equation}
where $z = (z_1, \ldots, z_s)^T$ and \( \phi(z) := \prod_{j=1}^{s} \phi(z_j) \) represents the joint density function for \( s \) independent and identically distributed (i.i.d.) random variables with marginal probability density function (PDF) \( \phi(\cdot) \), cumulative distribution function (CDF) \( \Phi(\cdot) \), and quantile function \( \Phi^{-1}(\cdot) \). These functions are applied to each component individually.
 
In this paper, we use a class of RQMC methods called randomly shifted rank-1 lattice rules, where the points are constructed using a generating vector ${z^*}\in \mathbb{N}^{s}$ and a random shift $\Delta\sim U((0,1)^s)$. The estimator is
\begin{equation}\label{lattice_rule}
    \hat{I}_{s}(G\circ\Phi^{-1}) = \frac{1}{N}\sum_{k=0}^{N-1}G\left(\Phi^{-1}\left(\left\lbrace\frac{k{z^*}}{N} + \Delta \right\rbrace\right)\right) ,
\end{equation}
where the brace $\{\cdot\}$ denotes the fractional part of each component.

We should note that the classical Koksma--Hlawka inequality \cite{niederreiter1992random} fails to provide a useful QMC error bound for unbounded function $G\circ \Phi^{-1}$. Most QMC error analyses in the literature are carried out for  a certain space of functions. To handle unbounded functions, we use a weighted RKHS  $\mathcal{F}$ of real valued functions on
$\R^s$ introduced in \cite{kuo2010randomly,kuo2006randomly,bib4}. 
The space contains the functions with mixed first derivatives square integrable under the weight function $\psi$. Let $1{:}s:=\{1,\ldots,s\}$.  For $u\subseteq 1{:}s$, denote $z_u:=(z_j)_{j\in u}$ and $z_{-u}:=(z_j)_{j\in{1:s}\setminus u}$. Moreover, $|u|$ denotes the cardinality of $u$, and $\partial^{u}G(z)$ denotes $\frac{\partial^{|u|}}{\partial z_{u}}G(z)$.  The norm of the space $\mathcal{F}$ is defined by
\begin{equation}\label{rkhs}
   \|G\|_{\mathcal{F}}^{2} = \\ \sum_{u\subseteq{1:s}}\frac{1}{\gamma_{u}}\int_{\R^{|u|}}\bigg(\int_{\R^{s-|u|}}\partial^{u}G(z)\phi_{-u}(z)dz_{-u}\bigg)^2 \psi_u^2(z)dz_{u} , 
\end{equation}
where $\psi_u(z) = \prod_{j\in u}\psi(z_j)$, $\phi_u(z)=\prod_{j\in u}\phi(z_j)$ and their subscripts are omitted when $u=1{:}s$. The weight function $\psi$ reflects the growth properties of the function $G(z)$, and the weight coefficient $\gamma_{u}$ measures the importance of the variable $z_u$ \cite{bib4}. As the weight function decreases more rapidly, the space encompasses functions exhibiting more pronounced growth near the boundary. Moreover, $\mathcal{F}$ is embedded in $L^2_{\phi}(\R^s)$ if for any finite $c$,
\begin{equation}\label{14}
    \int_{-\infty}^{c}\frac{\Phi(t)}{\psi^2(t)}\,dt + \int_{c}^{\infty}\frac{1-\Phi(t)}{\psi^2(t)}\,dt < \infty.
\end{equation} 
Examples of common pairings $(\phi,\psi)$
satisfying \eqref{14} are provided in \cite{kuo2012quasi,bib4}. 
Nichols and Kuo \cite{bib4} showed that  the MSE of the estimator \eqref{lattice_rule} is bounded by
\begin{equation}\label{eq:upperbound}   
 \mathbb{E}^\Delta |I_{s,\phi}(G)-\hat I_{s}(G\circ \Phi^{-1})|^2 \leq e_{s}^{\rm sh}(z^*)^2\|G\|_{\mathcal{F}}^2,
\end{equation}
where 
\begin{equation*}
e_{s}^{\rm sh}(z^*)^2 =\sum_{\emptyset \neq u\subseteq{1:s}}\frac{\gamma_{u}}{N}\sum_{k=0}^{N-1}\prod_{j\in u}\theta\left(\left\{\frac{kz^*_{j}}{N}\right\}\right),
\end{equation*}
and
\begin{equation}\label{eq:theta}
\theta (c)=\int_{\Phi^{-1}(c)}^{\infty} \frac{\Phi(t)-c}{\psi^2(t)}dt + \int_{\Phi^{-1}(1-c)}^{\infty} \frac{\Phi(t)-1+c}{\psi^2(t)}dt - \int_{-\infty}^{\infty} \frac{\Phi^2(t)}{\psi^2(t)}dt. 
\end{equation}

To ascertain the achievable convergence order using the lattice rule, it suffices to examine whether the function $G(z)$ adheres to the conditions stipulated in the RKHS $\mathcal{F}$ and find an optimal generating vector $z^*$ for minimizing the error bound \eqref{eq:upperbound}. However, searching such an optimal generating vector $z^*$ is typically computationally intensive, and conventional search methods can lead to a significant waste of computational resources. The component by component (CBC) algorithm is widely used to find a sub-optimal $z^*$ instead. 
Algorithm~\ref{alg:cbc} presents the process of the CBC algorithm.   Remarkably, \cite{cools2006constructing} proposed fast CBC algorithm, which can substantially enhance the efficiency of searching for ${z^*}$. Next, we introduce the results of error analysis in \cite{bib4}, upon which our subsequent analysis is fundamentally based.

\begin{algorithm}
\caption{CBC Algorithm}\label{alg:cbc}
\begin{algorithmic} 
    \State Set $z^*_1 = 1$
    \For{$d = 2,\dots,s$}
        \State with $z^*_1,\ldots,z^*_{d-1}$ fixed, choose $z^*_d\in 0{:}(N-1)$ and $\gcd(z^*_d,N)=1$ such that $e_{d}^{\rm sh}(z^*_1,\ldots,z^*_{d-1},z^*_d)$ is minimized.
    \EndFor
\end{algorithmic}
\end{algorithm}

\begin{Assumption}\label{assump1}
Assume there exist $r > 1/2$ and $C > 0$ such that  the Fourier series coefficient of $\theta(c)$ given by \eqref{eq:theta}  satisfies
\begin{equation*}
  \hat{\theta}(h):=\frac{1}{\pi^2h^2}\int_{\R}\frac{1}{\psi^2(t)}\sin^2(\pi h \Phi(t))\,dt \leq C|h|^{-2r} \  \ for\ all \ h \neq 0. 
\end{equation*} 
\end{Assumption}
\begin{theorem}\label{thm2.2}
    Let $G \in \mathcal{F}$ with density function $\phi$, weight $\gamma_{u}$ and weight function $\psi$. If Assumption \ref{assump1} is satisfied with a parameter $r>1/2$, then using a randomly shifted lattice rule with $N$ points in $s$ dimensions can be constructed by a CBC algorithm such that, for all $\lambda \in [\frac{1}{2}\ , r)$, the RMSE
\begin{equation}\label{eq:rmsebound}
    \sqrt{E^\Delta |I_{s,\phi}(G)-\hat I_{s}(G\circ \Phi^{-1})|^2}\\
    \leq\bigg(\frac{1}{\varphi(N)}\sum_{\emptyset \ne u\subseteq{1:s}}\gamma_{u}^{\frac{1}{2\lambda}} \left(2C^{\frac{1}{2\lambda}}\zeta\left(\frac{r}{\lambda}\right)\right)^{|u|}\bigg)^{\lambda} \|G\|_{\mathcal{F}},
\end{equation}
where $\zeta(x):=\sum_{k=1}^\infty k^{-x}$ denotes for the Riemann zeta function, and $\varphi(N):=|\{1\le i\le N-1:\gcd(i,N)=1\}|$ denotes the Euler function. 
\end{theorem}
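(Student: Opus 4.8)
The plan is to bound the shift-averaged worst-case error $e_s^{\rm sh}(z^*)$ of the CBC-constructed generating vector and then invoke the bound \eqref{eq:upperbound} from \cite{bib4}, which already reduces the RMSE to $e_s^{\rm sh}(z^*)\,\|G\|_{\mathcal F}$. Concretely, I would establish that the CBC output satisfies
\[
[e_s^{\rm sh}(z^*)^2]^{1/(2\lambda)} \le \frac{1}{\varphi(N)}\sum_{\emptyset\ne u\subseteq 1{:}s}\gamma_u^{1/(2\lambda)}\bigl(2C^{1/(2\lambda)}\zeta(r/\lambda)\bigr)^{|u|},
\]
and then raise both sides to the power $2\lambda$ and combine with \eqref{eq:upperbound}. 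The first genuine step is to expand $\theta$ in its Fourier series with coefficients $\hat\theta(h)$ and use the character-sum identity $\frac1N\sum_{k=0}^{N-1}e^{2\pi ikm/N}=\mathbf{1}[N\mid m]$ to obtain, for each $\emptyset\ne u$,
\[
\frac1N\sum_{k=0}^{N-1}\prod_{j\in u}\theta\!\left(\left\{\tfrac{kz_j}{N}\right\}\right)=\sum_{\substack{h_u\in\mathbb{Z}^{|u|}\\ h_u\cdot z_u\equiv 0\,(N)}}\prod_{j\in u}\hat\theta(h_j).
\]
The coefficients $\hat\theta(h)$ are nonnegative and even, and Assumption~\ref{assump1} bounds them by $C|h|^{-2r}$.

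Next I set $\tau:=1/(2\lambda)$; the hypothesis $\lambda\in[\tfrac12,r)$ is exactly what makes the two key inequalities valid. Since $\lambda\ge\tfrac12$ we have $\tau\le1$, so $x\mapsto x^{\tau}$ is subadditive on the nonnegative reals, which lets me distribute the power $\tau$ across both the outer sum over $u$ and the inner Fourier sum via $(\sum_i a_i)^{\tau}\le\sum_i a_i^{\tau}$. Since $\lambda<r$ we have $2r\tau=r/\lambda>1$, precisely the condition under which the per-coordinate series converges:
\[
\sum_{h\neq 0}\hat\theta(h)^{\tau}\le C^{\tau}\sum_{h\ne0}|h|^{-2r\tau}=2C^{\tau}\zeta(2r\tau)=2C^{1/(2\lambda)}\zeta(r/\lambda).
\]
This already exposes the per-element factor $2C^{1/(2\lambda)}\zeta(r/\lambda)$ that will appear raised to the power $|u|$.

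The core is an induction on the dimension $d$ that mirrors the greedy CBC choice. Assuming the stated bound for $e_{d-1}^{\rm sh}$, I use that $z_d^*$ minimizes $e_d^{\rm sh}$ over the admissible residues $\mathcal Z=\{z:\gcd(z,N)=1\}$; because $\tau\le1$, the minimum is dominated by a $\tau$-power mean, giving $[e_d^{\rm sh}(z_d^*)^2]^{\tau}\le\frac{1}{\varphi(N)}\sum_{z_d\in\mathcal Z}[e_d^{\rm sh}(z_d)^2]^{\tau}$. I then split the subsets $u\subseteq1{:}d$ into those excluding $d$ (independent of $z_d$, controlled by the induction hypothesis) and those of the form $u=v\cup\{d\}$. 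For the latter I apply subadditivity to pull $\tau$ inside the Fourier sum, separate the index $h_d=0$ from $h_d\ne0$, and average over $z_d$. The $h_d\ne0$ contribution is where the new coordinate enters: after bounding the averaged congruence count $\frac1{\varphi(N)}\sum_{z_d\in\mathcal Z}\mathbf{1}[h_dz_d\equiv -h_v\cdot z_v\,(N)]$, the residual sum in $h_d$ produces exactly the factor $2C^{1/(2\lambda)}\zeta(r/\lambda)$ computed above, upgrading the constant from $|v|$ to $|v|+1=|u|$ copies while keeping a single overall factor $1/\varphi(N)$. Collecting the two groups of subsets closes the induction, and evaluating at $d=s$ and taking $2\lambda$-th roots yields the theorem.

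The main obstacle is the bookkeeping in this inductive step, specifically the handling of the $h_d=0$ (mean) terms together with the congruence-counting for general composite $N$. One must verify that the indices containing a vanishing Fourier component do not spawn uncontrolled $\hat\theta(0)$ factors, and that averaging $\mathbf{1}[h_dz_d\equiv m\,(N)]$ over the coprime residues leaves only the single $1/\varphi(N)$ prefactor rather than a $\gcd(h_d,N)$-dependent penalty; for prime $N$ this is immediate since each nonzero $h_d$ admits a unique solution, whereas for general $N$ it requires a careful count of the coprime solutions of a linear congruence. Getting this accounting exactly right — so that the per-coordinate constant compounds cleanly to the $|u|$-th power while only one $1/\varphi(N)$ survives — is the delicate part; the remaining estimates are routine applications of subadditivity and Assumption~\ref{assump1}.
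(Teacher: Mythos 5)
Your outline is essentially the proof of this theorem as it exists in the literature: the paper itself gives no argument (its proof is simply a citation to Nichols and Kuo \cite{bib4}), and the proof there proceeds exactly along your lines — Fourier/character-sum expansion of the shift-averaged worst-case error, Jensen's inequality with exponent $\tau = 1/(2\lambda)\le 1$, the bound $\sum_{h\ne 0}\hat\theta(h)^\tau \le 2C^\tau\zeta(r/\lambda)$ requiring $\lambda < r$, and the CBC induction in which the minimum over coprime residues is bounded by the mean. The two delicate points you flag are resolved in \cite{bib4} exactly as you anticipate: the subtracted constant term in \eqref{eq:theta} is precisely what makes $\theta$ mean-zero, so $\hat\theta(0)=0$ and no zero-frequency factors arise; and the coprime congruence count is handled by splitting the frequencies $h_d$ according to whether $N\mid h_d$, which is what yields the single $1/\varphi(N)$ prefactor for general composite $N$ rather than a $\gcd$-dependent penalty.
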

\begin{proof}
    See \cite{bib4} for the proof. 
\end{proof}

\begin{remark}
Note that the Euler's totient function $\varphi(N)$ equals $N-1$ when $N$ is a prime number, and it holds that $\varphi(N) > N/9$ for all $N \leq 10^{30}$ as referenced in \cite{kuo2012quasi}. Consequently, the term $1/\varphi(N)$ can be approximated by a constant factor times $1/N$. This approximation implies an upper bound of $O(N^{-r+\epsilon})$ in \eqref{eq:rmsebound}, where $\epsilon$ can be any arbitrarily small positive number. The parameter $r$, pertaining to commonly encountered pairings of distribution and weight functions, has been computed in the studies such as \cite{bib4}. We select and list some combinations of the density function $\phi$ and the weight function $\psi$ that satisfy \eqref{14} from \cite{kuo2012quasi,bib4} in Table \ref{table1}, which we will reference later in the paper.
\end{remark}

\begin{table}[ht]
    \centering
 \renewcommand{\arraystretch}{1.2} 
     \begin{tabular}{ccc}
        \toprule
          \diagbox{$\psi(t)$}{$\phi(t)$} & $\frac{\exp(-t^2/2\nu)}{\sqrt{2\pi}\nu}$ & $\frac{1}{\sqrt{\nu\pi}}\frac{\Gamma(\frac{\nu+1}{2})}{\Gamma(\frac{\nu}{2})}(1+\frac{t^2}{\nu})^{-\frac{\nu+1}{2}}$ \\\midrule
       $\exp{(-t^2/(2\alpha))}$ &  $r = 1-\frac{\nu}{\alpha}$ & - \\
            & $\alpha>2\nu$ &  \\\hline
        $(1+|t|)^{-\alpha}$& $r = 1-\delta$ & $r=1-\frac{2\alpha+1}{2\nu}$\\
        & $\delta\in(0,\min(\frac{1}{2},\frac{9}{8}\alpha\nu))$ & $2\alpha +1<\nu$ \\
        \bottomrule
    \end{tabular}
    \caption{The parameter $r$ in Assumption~\ref{assump1} for some common combinations of the probability density function $\phi$ and weight function $\psi$.\label{table1}}
    
\end{table}

\section{Results for Gaussian proposals}\label{sec:gaussian}
\subsection{Convergence rates of randomly shifted lattice rule}

Now we provide some conditions such that the function $G(z)$ given in \eqref{eq:gis} belongs to the RKHS under the standard Gaussian density setting in which $\phi(t)=(2\pi)^{-1/2}\exp{(-t^2/2)}$. In this case, we take the weight function as 
\begin{equation}\label{eq:psi1}
    \psi(t) = \exp{(-t^2/(2\alpha))},\ \alpha>2
\end{equation}
as illustrated in Table~\ref{table1}. 
Let $g(\mathtt{u}) =G(\Phi^{-1}(\mathtt{u}))$, where $\Phi$ is the CDF of standard normal  distribution and $\mathtt{u}\in (0,1)^s$. We focus on the so-called growth conditions for (possibly) unbounded function $g(\mathtt{u})$, which were first used in \cite{bib3} for studying scrambled nets variance. To be specific, Owen \cite{bib3} showed that if there exist constants $B_j> 0$ and $B > 0$ such that
\begin{equation}\label{eq:condition1}
|\partial^{v}g(\mathtt{u})| \leq B \prod_{j=1}^s \min(\mathtt{u}_j,1-\mathtt{u}_j)^{-1\{j\in v\}-B_j}
\end{equation}
for any $v \subseteq 1{:}s$, using scrambled net then yields a mean error of $O(N^{-1+\max_j B_j+\epsilon})$ for arbitrarily small $\epsilon>0$. Recently, \cite{bib2} used another growth condition for $G(z)$ on $\mbr^s$ rather than $g(\mathtt{u})$ on the unit cube $(0,1)^s$, i.e.
\begin{equation}\label{eq:condition2}
|\partial^{v}G(z)| \leq B \prod_{j=1}^s (1-\Phi(|z_j|))^{-B_j}.
\end{equation}
He et al. \cite{bib2} proved that \eqref{eq:condition2} implies \eqref{eq:condition1}.

Since for every $t>0$, we have 
\begin{equation*}
\frac{t}{1+t^{2}}\exp\left\{-t^{2}/2\right\} \leq \int_{t}^{\infty}\exp\left\{-\xi^{2}/2\right\}\,d\xi \leq \frac{1}{t}\exp\left\{-t^{2}/2\right\}.
\end{equation*}
Thus 
\begin{equation}
|z_{j}|\exp\left\{z_{j}^2/2\right\}\le(1-\Phi(|z_{j}|))^{-1} \le \left(\frac{1}{|z_{j}|}+|z_{j}|\right)\exp\left\{z_{j}^2/2\right\}.
\end{equation}
As a result, we may use the following form of growth condition instead of \eqref{eq:condition2}
\begin{equation}\label{eq:bdy2}
|\partial^{v}G(z)| \leq B \prod_{j=1}^s \exp\{M_jz_j^2\}
\end{equation}
for some constants $M_j$.
It is easy to see that if \eqref{eq:bdy2} holds, then \eqref{eq:condition2} holds with $B_j=2M_j$. On the other hand, if \eqref{eq:condition2} holds, then \eqref{eq:bdy2} holds with $M_j=B_j/2+\epsilon$ for arbitrarily small $\epsilon>0$. So the two growth conditions \eqref{eq:bdy2} and \eqref{eq:condition2} are considered almost equivalent. For easy of notation, we particularly take all $M_j=M$ in the following assumption. Let $\| z \|^2=\norm{z}_{I_s}^2=z_1^2+\dots+z_s^2$.

\begin{Assumption}\label{assum3}Assume that $G(z)$ is a real-valued function  on $\mbr^s$ and there exist $B>0$ and a growth rate $M\in \R$ such that for any $z\subseteq 1{:}s$,
\begin{equation}\label{eq:condition3}
|\partial^u G (z)| \leq B\exp\left\{M\|z\|^2\right\}.
\end{equation}
\end{Assumption}

\begin{theorem}\label{wuf}
    If $G(z)$ defined on $\R^{s}$ satisfies Assumption \ref{assum3} with $0<M < 1/4$, then $G$ belongs to the RKHS  $\mathcal{F}$ equipped with standard normal density function $\phi$ and weight function $\psi$ defined by \eqref{eq:psi1} with  $\alpha>2$, and  a randomly shifted lattice rule $\hat{I}_{s}(G\circ\Phi^{-1})$ with $N$ points can be constructed by a CBC algorithm with an RMSE of 
    $O(N^{-1+2M+\epsilon})$ for arbitrarily small $\epsilon > 0$.
\end{theorem}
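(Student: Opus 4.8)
The plan is to prove the two assertions in turn: first that the growth condition of Assumption~\ref{assum3} forces the weighted RKHS norm \eqref{rkhs} to be finite, and then to feed this into Theorem~\ref{thm2.2} with the value of $r$ read off from Table~\ref{table1}, optimizing over the admissible range of the weight parameter $\alpha$.

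The starting observation is that the hypothesis $M<1/4$ is exactly what makes the interval $(2,\,1/(2M))$ nonempty, so I would begin by fixing some $\alpha$ with $2<\alpha<1/(2M)$; the lower bound $\alpha>2$ is the one demanded by \eqref{eq:psi1} and Table~\ref{table1}, while the upper bound $\alpha<1/(2M)$ is what will be needed for integrability. To show $G\in\mathcal{F}$, I would estimate each summand of \eqref{rkhs} separately. For a fixed $u\subseteq 1{:}s$, I first bound the inner integral over $z_{-u}$: applying $|\partial^{u}G(z)|\le B\exp\{M\|z\|^2\}$ and integrating each coordinate $j\notin u$ against the standard normal density gives a factor $\int_{\R}\exp\{Mt^2\}\phi(t)\,dt=(1-2M)^{-1/2}$, which is finite because $M<1/2$. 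This leaves a clean bound of the form $B(1-2M)^{-(s-|u|)/2}\exp\{M\sum_{j\in u}z_j^2\}$ for the inner integral. Squaring and integrating against $\psi_u^2(z_u)=\prod_{j\in u}\exp\{-z_j^2/\alpha\}$ then produces per-coordinate Gaussian integrals $\int_{\R}\exp\{-(1/\alpha-2M)t^2\}\,dt$, each finite precisely because $\alpha<1/(2M)$. Summing the resulting finite products over the $2^s$ subsets $u$ yields $\|G\|_{\mathcal{F}}^2<\infty$.

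With membership established, the second half is essentially bookkeeping. For the standard normal density $\phi$ (the case $\nu=1$) paired with $\psi(t)=\exp\{-t^2/(2\alpha)\}$, Table~\ref{table1} certifies that Assumption~\ref{assump1} holds with $r=1-1/\alpha$, which is admissible since $\alpha>2$ gives $r>1/2$. Invoking Theorem~\ref{thm2.2}, together with the remark that $1/\varphi(N)$ is within a constant factor of $1/N$, then yields an RMSE of $O(N^{-r+\epsilon})=O(N^{-1+1/\alpha+\epsilon})$ for the CBC-constructed lattice rule.

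The final step is the optimization over $\alpha$, and this is where the constraints interact most delicately. Because $\alpha$ must stay strictly below $1/(2M)$ for $G$ to lie in $\mathcal{F}$, I cannot set $\alpha=1/(2M)$ outright; instead I would let $\alpha\uparrow 1/(2M)$, so that $1/\alpha\downarrow 2M$ and hence $1/\alpha=2M+\delta$ for arbitrarily small $\delta>0$. The exponent then becomes $-1+2M+\delta+\epsilon$, and absorbing $\delta$ into the arbitrarily small $\epsilon$ gives the claimed rate $O(N^{-1+2M+\epsilon})$. The main obstacle, and the crux of the argument, is precisely this tension between the two bounds on $\alpha$: one needs $\alpha>2$ for Assumption~\ref{assump1} (and hence for any useful convergence order) and $\alpha<1/(2M)$ for integrability of the weighted norm, and the hypothesis $M<1/4$ is exactly the threshold guaranteeing that both can be satisfied while driving $1/\alpha$ down to $2M$.
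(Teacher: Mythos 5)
Your proof is correct and follows essentially the same route as the paper's: both establish $\|G\|_{\mathcal{F}}<\infty$ by bounding each subset term of \eqref{rkhs} under the constraint $2<\alpha<1/(2M)$, then invoke Theorem~\ref{thm2.2} with $r=1-1/\alpha$ from Table~\ref{table1} and push $1/\alpha$ down to $2M$ (the paper sets $1/\alpha=2M+\epsilon/2$, you take a limit $\alpha\uparrow 1/(2M)$ — the same maneuver). Your version merely makes the per-coordinate Gaussian integrals explicit where the paper leaves them as a finiteness check.
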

\begin{proof}
We now show that $G(z)$ belongs to the RKHS by examining the boundedness of the norm \eqref{rkhs}, which is a  sum of the terms
\begin{equation*}
    T_u:=\int_{\R^{|u|}}\Bigg(\int_{\R^{s-|u|}}\partial^{u}G(z)\phi_{-u}(z)\,dz_{-u}\Bigg)^{2}\psi_{u}(z)^{2}\,dz_{u}
\end{equation*}
with $u\subseteq 1{:}s$.
Using Assumption \ref{assum3}, we have
\begin{align*}
    T_u&\le C\int_{\R^{|u|}}\Bigg( \int_{\R^{s-|u|}}  \exp\left\{M\|z\|^2\right\}\exp\left\{-\frac{\sum_{j\notin u}z_{j}^{2}}{2}\right\}\,dz_{-u}\Bigg)^{2}\exp\left\{-\frac{\sum_{j\in u}z_j^2}{\alpha}\right\}\,dz_{u} \\
    &= C\int_{\R^{|u|}}\Bigg( \int_{\R^{s-|u|}}  \exp\left\{(M-1/2)\sum_{j\notin u}z_{j}^{2}\right\}\,dz_{-u}\Bigg)^{2}\exp\left\{(2M-1/\alpha)\sum_{j\in u}z_j^2\right\}\,dz_{u}
\end{align*}
for a constant $C>0$.
Since $2<\alpha<1/(2M)$, we have $T_u<\infty$. Applying Theorem \ref{thm2.2} with $\lambda=r-\epsilon/2=1-1/\alpha-\epsilon/2$ and $1/\alpha=2M+\epsilon/2$ completes the proof. 
\end{proof}

If $M$ is arbitrarily small, then we can get an RMSE rate of $O(N^{-1+\epsilon})$ for the randomly shifted lattice rule. This is the case for $G(z) = \exp(\zeta^Tz)$ with any fixed vector $\zeta\in \R^s$. Under the growth condition \eqref{eq:condition2}, He et al. \cite{bib2} showed that scrambled net quadrature rule yields an RMSE of  $O(N^{-1+\max_j B_j+\epsilon})$. Taking $B_j=2M$ gives the same rate of the randomly shifted lattice rule. From this point of view, under the growth condition, the two major classes of RQMC methods enjoy the same RMSE rate for unbounded integrands. 

We are ready to perform the error analysis for  randomly shifted lattice rule with the IS function $\gis(z)=f(x)\exp(-n\Psi(x))W(x)$ given by \eqref{eq:gis}, where $x=\mu+Lz$. Recall that $\Sigma=LL^\top$ is the covariance matrix of Gaussian proposal and $\Sigma_0$ is the covariance matrix of the prior distribution. Let $\lambda_{\min}(E)$ and $\lambda_{\max}(E)$ denote the smallest and largest eigenvalues of the matrix $E$, respectively, and let $\mathbb{N}_0$ be the set of all nonnegative integers.  Due to the transformation $x=\mu+Lz$ in formulating $\gis(z)$, we require higher order mixed derivatives.  For $a = (a_1,...,a_s)\in \mathbb{N}_0^s$, define $|a|=a_1 +\dots + a_s$, $a!=a_1!\dots\alpha_s!$, and $$D^{a}G(z):=\frac{\partial^{|a|}}{\partial z_1^{a_1}\dots \partial z_s^{a_s}}G(z).$$
As discussed, Assumption~\ref{assum3s} addresses higher order mixed derivatives and represents a somewhat stronger condition compared to Assumption~\ref{assum3}. This assumption is crucial for managing the complexities associated with these derivatives. Define $G_0(z)=f(z)\exp(-n\Psi(z))$.

\begin{Assumption}\label{assum3s}Suppose $G(z)$ is a real-valued function on $\mbr^s$ and there exist $B>0$ and a growth rate $M\in\R$ such that for any $a\in\mathbb{N}_0^s$ satisfying $|a|\le s$,
\begin{equation}\label{eq:condition33}
|D^a G(z)| \leq B\exp\left\{M\|z\|^2\right\}.
\end{equation}
\end{Assumption}

\begin{theorem}\label{ISqmc}
Let $\gis(z)=G_0(\mu+Lz)W(\mu+Lz)$ with $W(x)$ given by \eqref{W(z)} and $\Sigma=LL^T$. Suppose that $G_0(z)$ satisfies Assumption~\ref{assum3s} with the growth rate $M\in\R$. If 
\begin{equation}\label{eq:gammacond}
   \gamma := \lambda_{\min}(\Sigma)\left(\frac{1}{\lambda_{\max}(\Sigma_0)}-2M\right) > 1/2, 
\end{equation}
then a randomly shifted lattice rule $\hat{I}_{s}(\gis\circ\Phi^{-1})$ with $N$ points can be constructed by a CBC algorithm with an RMSE of  $O(N^{-\min(\gamma,1)+\epsilon})$ for arbitrary small $\epsilon > 0$.
\end{theorem}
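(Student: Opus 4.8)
The plan is to reduce the claim to Theorem~\ref{wuf} by showing that $\gis$ satisfies the first-order growth condition of Assumption~\ref{assum3} with a suitable growth rate $M'$, and then to identify $M'$ with $(1-\gamma)/2$ through a spectral estimate. First I would differentiate $\gis(z)=H(\mu+Lz)$, where $H:=G_0 W$, using the chain rule. Since $x=\mu+Lz$ is linear in $z$, each mixed first-order derivative $\partial^u\gis$ expands into a finite sum of higher-order mixed derivatives $D^a H(\mu+Lz)$ with $|a|=|u|\le s$, weighted by products of entries of $L$; this is precisely why Assumption~\ref{assum3s}, which controls $D^aG_0$ up to total order $s$, is the correct hypothesis here rather than Assumption~\ref{assum3}. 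Applying the Leibniz rule to $D^a(G_0W)$ and using that $W(x)=c\exp\{Q(x)\}$ with $Q$ quadratic (so that $D^cW=W\cdot P_c$ for a polynomial $P_c$ of degree $\le|c|$), I would obtain a bound of the form $|\partial^u\gis(z)|\le B'\,\mathrm{poly}(\mu+Lz)\,\exp\{M\|\mu+Lz\|^2+Q(\mu+Lz)\}$.

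Next I would read off the quadratic growth in $z$. Writing $Q(x)=\tfrac12 x^T(\Sigma^{-1}-\Sigma_0^{-1})x+(\text{linear})+(\text{const})$ as in \eqref{W(z)} and collecting the degree-two terms in $z$, the exponent equals $z^T A z$ plus lower-order terms, where $A:=ML^TL+\tfrac12 L^T(\Sigma^{-1}-\Sigma_0^{-1})L$. Because $\Sigma\succ0$ the factor $L$ is invertible, which gives the key identity $L^T\Sigma^{-1}L=I_s$ and hence $A=\tfrac12 I_s-\tfrac12 L^T(\Sigma_0^{-1}-2MI_s)L$. The polynomial and linear factors can be absorbed into the exponential at the cost of an arbitrarily small increase in the exponent, so $\gis$ satisfies Assumption~\ref{assum3} with growth rate $M'=\lambda_{\max}(A)+\epsilon'$ for any $\epsilon'>0$.

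The crux is then the eigenvalue estimate $\lambda_{\max}(A)\le(1-\gamma)/2$. From the displayed form of $A$ one has $\lambda_{\max}(A)=\tfrac12-\tfrac12\lambda_{\min}\!\left(L^T(\Sigma_0^{-1}-2MI_s)L\right)$. Since $\gamma>1/2>0$ forces $\Sigma_0^{-1}-2MI_s\succ0$, I would bound, for any unit vector $v$, $v^TL^T(\Sigma_0^{-1}-2MI_s)Lv=(Lv)^T(\Sigma_0^{-1}-2MI_s)(Lv)\ge\lambda_{\min}(\Sigma_0^{-1}-2MI_s)\|Lv\|^2\ge\left(\tfrac{1}{\lambda_{\max}(\Sigma_0)}-2M\right)\lambda_{\min}(L^TL)$, and combine this with $\lambda_{\min}(L^TL)=\lambda_{\min}(\Sigma)$ (the nonzero spectra of $L^TL$ and $LL^T$ coincide). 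This gives $\lambda_{\min}\!\left(L^T(\Sigma_0^{-1}-2MI_s)L\right)\ge\gamma$ and therefore $\lambda_{\max}(A)\le(1-\gamma)/2$.

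Finally I would invoke Theorem~\ref{wuf}. When $1/2<\gamma<1$ we have $0<M'<1/4$ after choosing $\epsilon'$ small enough, so the theorem yields an RMSE of $O(N^{-1+2M'+\epsilon})=O(N^{-\gamma+\epsilon})$; when $\gamma\ge1$ we have $\lambda_{\max}(A)\le0$, so $M'$ may be taken as small as desired, producing $O(N^{-1+\epsilon})$. Combining the two regimes gives the claimed rate $O(N^{-\min(\gamma,1)+\epsilon})$. I expect the main obstacle to be the bookkeeping that turns the chain-rule and Leibniz expansion into a single clean exponential bound, in particular verifying that no term forces a growth rate exceeding $\lambda_{\max}(A)$; by contrast the spectral inequality $\lambda_{\max}(A)\le(1-\gamma)/2$ is the conceptual heart of the argument and is exactly where the hypothesis $\gamma>1/2$ enters.
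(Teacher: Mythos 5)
Your proposal is correct, and it reaches the theorem by a genuinely different organizational route than the paper. The paper does not pass through Theorem~\ref{wuf}: it redoes the RKHS-norm verification directly for $\gis$. Concretely, it uses the Faa di Bruno formula (as you do) to write $\partial^u\gis$ as a finite sum of terms $\left(\prod_i z_i^{t_i}\right)D^aG_0(x)W(x)$ with $|a|\le s$, bounds these via Assumption~\ref{assum3s} and \eqref{W(z)}, and then bounds the resulting norm integrals over $\R^s$ after changing variables back to $x=\mu+Lz$; finiteness is equivalent to negative definiteness of $2MI_s+(1-1/\alpha)\Sigma^{-1}-\Sigma_0^{-1}$, which yields the admissible window $2<\alpha<1/(1-\gamma)^+$, and Theorem~\ref{thm2.2} is then applied with $1/\alpha=(1-\gamma)^++\epsilon/2$. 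Your route instead verifies the pointwise growth condition (Assumption~\ref{assum3}) for $\gis$ itself with rate $M'=\lambda_{\max}(A)+\epsilon'$, where $A=\tfrac12 I_s-\tfrac12 L^T\left(\Sigma_0^{-1}-2MI_s\right)L$, proves the spectral estimate $\lambda_{\max}(A)\le(1-\gamma)/2$, and invokes Theorem~\ref{wuf} as a black box. The two arguments are mathematically equivalent: conjugating the paper's matrix by $L$ gives $L^T\left(2MI_s+(1-1/\alpha)\Sigma^{-1}-\Sigma_0^{-1}\right)L=2A-(1/\alpha)I_s$, so the paper's negative-definiteness condition is exactly your inequality $\lambda_{\max}(A)<1/(2\alpha)$, and optimizing over $\alpha\in(2,1/(1-\gamma)^+)$ reproduces your rate. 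Your version is more modular and isolates cleanly where $\gamma>1/2$ enters (it is what keeps $M'<1/4$, i.e., $\alpha>2$); the paper's direct computation has the advantage of keeping the RKHS norm of $\gis$ explicitly in hand, which is what gets re-examined later when the dependence of $\|\gis\|_{\mathcal{F}}$ on the noise level $n$ is tracked in Theorem~\ref{thm9} and Corollary~\ref{cor:rates}.
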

\begin{proof}
Let $x = \mu+Lz$ so that $\gis(z)=G_0(x)W(x)$.    By the Faa di Bruno formula, due to the specific form of $W(x)$ given by \eqref{W(z)}, $\partial^u \gis(z)$ is a sum of finite terms of the form
\begin{align*}
    T(z) = \left(\prod_{i=1}^s z_i^{t_i}\right)D^a G_0(x)W(x)
\end{align*}
for $a\in\mbn_0^s$ satisfying $|a|\le s$ and some integers $0\le t_i\le s$.
By Assumption~\ref{assum3s} and using \eqref{W(z)}, we have
\begin{align*}
    |T(z)| &\lesssim\left(\prod_{i=1}^s|z_i|^{t_i}\right)\exp\left\{M\|x\|^2 + \frac{1}{2}\norm{x-\mu}_{\Sigma}^2 - \frac{1}{2}\norm{x-\mu_0}_{\Sigma_0}^2\right\},
\end{align*}
where the symbol $\lesssim$ is used for hiding a constant independently of $z$.
As a result, due to $\alpha>2$,
\begin{align}
    S &:= \int_{\R^u}\left(\int_{\R^{-u}}T(z)^2\phi_{-u}(z)dz_{-u}\right)\psi_u(z)^2dz_u\\\notag
    &\lesssim \int_{\R^s}\left(\prod_{i=1}^s|z_i|^{2t_i}\right)\exp\left\{2M\|x\|^2 + \norm{x-\mu}_{\Sigma}^2 - \norm{x-\mu_0}_{\Sigma_0}^2\right\}\psi(z)^2dz\\\notag
    & \lesssim \int_{\R^s}\left(\prod_{i=1}^s|z_i|^{2t_i}\right)\exp\left\{2M\|x\|^2 + (1-1/\alpha)\norm{x-\mu}_{\Sigma}^2 - \norm{x-\mu_0}_{\Sigma_0}^2\right\}dx.\notag
\end{align}
If $2MI_s+(1-1/\alpha)\Sigma^{-1}  - \Sigma_0^{-1}$ is negative definite, one gets $S<\infty$ and then we have $\|\gis\|_{\mathcal{F}}< \infty$ by noticing each term in \eqref{rkhs} is bounded. To this end, it suffices to take
\begin{align*}
    2M+\frac{1-1/\alpha}{\lambda_{\min}(\Sigma)}-\frac 1{\lambda_{\max}(\Sigma_0)}< 0.
\end{align*}
Therefore, $2<\alpha<1/(1-\gamma)^+$. Applying Theorem \ref{thm2.2} with $\lambda=r-\epsilon/2=1-1/\alpha-\epsilon/2$ and $1/\alpha=(1-\gamma)^++\epsilon/2$ we can complete the proof, where $t^+:=\max(t,0)$.
\end{proof}

\begin{corollary}\label{cor:gaussian}Consider the same setting as in Theorem~\ref{ISqmc}. Let  $\epsilon>0$ be an arbitrary small constant.
\begin{itemize}
    \item If $\Sigma=\Sigma_0$ (as in PriorIS and ODIS) and $M<1/(4\lambda_{\max}(\Sigma_0))$, the random shifted lattice rule has an RMSE of $O(N^{-1+2M^+\lambda_{\max}(\Sigma_0)+\epsilon})$.
    \item If $\Sigma=n^{-1}\Sigma^*$ (as in LapIS) and 
    \begin{equation}\label{eq:gamcond2}
        \gamma_n=\frac{\lambda_{\min}(\Sigma^*)}{n}\left(\frac1{\lambda_{\max}(\Sigma_0)}-2M\right)>1/2,
    \end{equation}
    where $\Sigma^*= (\nabla^2\Psi(\mu^*))^{-1}$, we can get a RMSE of $O(N^{-\min(\gamma_n,1)+\epsilon})$. 
\end{itemize}
\end{corollary}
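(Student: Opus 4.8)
The plan is to treat both items as specializations of Theorem~\ref{ISqmc} to the two covariance choices $\Sigma=\Sigma_0$ and $\Sigma=n^{-1}\Sigma^*$, the second being a direct substitution and the first requiring a sharpening of the generic bound that exploits a cancellation in the likelihood ratio \eqref{W(z)}. Throughout, $G_0$ satisfies Assumption~\ref{assum3s} by the standing hypotheses of the corollary.

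For the LapIS case I would substitute $\Sigma=n^{-1}\Sigma^*$ into the defining quantity \eqref{eq:gammacond}. Since $\lambda_{\min}(n^{-1}\Sigma^*)=n^{-1}\lambda_{\min}(\Sigma^*)$, the constant $\gamma$ of Theorem~\ref{ISqmc} becomes precisely the $\gamma_n$ of \eqref{eq:gamcond2}. The hypothesis $\gamma_n>1/2$ is then exactly the hypothesis $\gamma>1/2$ required by Theorem~\ref{ISqmc}, and the conclusion $O(N^{-\min(\gamma_n,1)+\epsilon})$ follows verbatim; no further work is needed.

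For PriorIS/ODIS the key observation is that $\Sigma=\Sigma_0$ annihilates the quadratic term $\tfrac12 x^T(\Sigma^{-1}-\Sigma_0^{-1})x$ in \eqref{W(z)}, so that $W$ collapses to the exponential of a purely affine function of $x$. I would then re-run the integral estimate for $S$ in the proof of Theorem~\ref{ISqmc} under this simplification: the quadratic form appearing in the exponent reduces from $2MI_s+(1-1/\alpha)\Sigma^{-1}-\Sigma_0^{-1}$ to $2MI_s-\tfrac1\alpha\Sigma_0^{-1}$, whose negative-definiteness is controlled by the single scalar inequality $2M<\tfrac{1}{\alpha\,\lambda_{\max}(\Sigma_0)}$ in place of the coarser two-eigenvalue split used for general $\Sigma$. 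When $M>0$, combining $1/\alpha>2M\lambda_{\max}(\Sigma_0)$ with the admissibility constraint $\alpha>2$ forces $M<1/(4\lambda_{\max}(\Sigma_0))$, which is exactly the stated hypothesis; taking $1/\alpha=2M\lambda_{\max}(\Sigma_0)+\epsilon/2$ and invoking Theorem~\ref{thm2.2} with $\lambda=r-\epsilon/2$ gives $O(N^{-1+2M\lambda_{\max}(\Sigma_0)+\epsilon})$. When $M\le0$ the matrix is negative definite for every $\alpha>2$, so $1/\alpha=\epsilon/2$ yields $O(N^{-1+\epsilon})$; the two cases merge into the advertised $O(N^{-1+2M^+\lambda_{\max}(\Sigma_0)+\epsilon})$. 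Equivalently, one may verify directly that $\gis$ meets Assumption~\ref{assum3} with effective growth rate $M^+\lambda_{\max}(\Sigma_0)+\epsilon$, using $M\|\mu+Lz\|^2\le M^+\lambda_{\max}(\Sigma_0)\|z\|^2$ up to affine terms, and then apply Theorem~\ref{wuf}.

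The only genuine obstacle is bookkeeping: I must check that the affine exponent left over from $W$, together with the polynomial prefactors $\prod_i|z_i|^{t_i}$ generated by the Faa di Bruno expansion, contribute nothing to the leading quadratic growth and can be swept into an arbitrarily small $\epsilon$-inflation of the exponent without destroying negative-definiteness --- exactly the device already used in Theorem~\ref{ISqmc}. It is worth stressing why the cancellation is indispensable: substituting $\Sigma=\Sigma_0$ blindly into the generic $\gamma$ would yield only $\lambda_{\min}(\Sigma_0)/\lambda_{\max}(\Sigma_0)-2M\lambda_{\min}(\Sigma_0)$, which is strictly worse than $1-2M^+\lambda_{\max}(\Sigma_0)$ unless $\Sigma_0$ is a scalar multiple of the identity, so the sharper analysis is precisely what delivers the near-$O(N^{-1})$ rate claimed in the first item.
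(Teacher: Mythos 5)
Your proposal is correct and follows essentially the same route as the paper: the LapIS item by direct substitution of $\lambda_{\min}(\Sigma)=\lambda_{\min}(\Sigma^*)/n$ into Theorem~\ref{ISqmc}, and the PriorIS/ODIS item by exploiting the cancellation $2MI_s+(1-1/\alpha)\Sigma^{-1}-\Sigma_0^{-1}=2MI_s-(1/\alpha)\Sigma_0^{-1}$, which yields the scalar condition $2M<1/(\alpha\lambda_{\max}(\Sigma_0))$ and, with $\alpha>2$, the constraint $M<1/(4\lambda_{\max}(\Sigma_0))$ and the stated rate via Theorem~\ref{thm2.2}. Your closing observation that blind substitution of $\Sigma=\Sigma_0$ into the generic $\gamma$ would give a strictly worse exponent is exactly why the paper, like you, re-runs the negative-definiteness argument rather than quoting Theorem~\ref{ISqmc} verbatim.
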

\begin{proof}
If $\Sigma=\Sigma_0$, $2MI_s+(1-1/\alpha)\Sigma^{-1}  - \Sigma_0^{-1}=2MI_s-(1/\alpha)\Sigma_0^{-1}$ needs to be negative definite. We thus take $2M-1/(\lambda_{\max}(\Sigma_0)\alpha)<0$. This implies $2<\alpha<1/(2M^+\lambda_{\max}(\Sigma_0))$.
The second part can be proved by applying Theorem \ref{ISqmc} with  $\lambda_{\min}(\Sigma)=\lambda_{\min}(\Sigma^*)/n$.
\end{proof}

\begin{remark}\label{rem:qmcrate}
Although the case of $M>0$ is of interest, Theorem~\ref{ISqmc} and  Corollary~\ref{cor:gaussian} include the case of $M\le 0$ in which  the function $G_0(z)$ and its partial derivatives are bounded. It is important to note that a larger value of \( M \) encompasses a broader range of functions. However, as implied in the condition \eqref{eq:gammacond}, the growth rate $M$ should be bounded above by the constant $1/(2\lambda_{\max}(\Sigma_0))$.  If $M$ is an arbitrarily small positive number and $\lambda_{\min}(\Sigma)\ge \lambda_{\max}(\Sigma_0)$, the lattice rule constructed according to Theorem 3.2 rule yields an RMSE of $O(N^{-1+\epsilon})$ as suggested by Theorem~\ref{ISqmc}. He et al. \cite{bib2} obtained the same rate for scrambled net based IS. It should be noted that unbounded integrands with an arbitrarily small growth rate $M$ is friendly to QMC. We refer this situation as the \textit{QMC-friendly growth condition} for unbounded integrands, under which the ratio $\lambda_{\min}(\Sigma)/\lambda_{\max}(\Sigma_0)$ has an impact on the RMSE rate of IS. Corollary~\ref{cor:gaussian} shows that PriorIS and ODIS enjoy RMSE of $O(N^{-1+\epsilon})$  if $M$ is an arbitrarily small. On the contrary, the RMSE rate of LapIS depends on the noise level $n$. The rate deteriorates as $n$ goes up. Moreover, the condition \eqref{eq:gamcond2} may fail to hold for large enough $n$.
\end{remark}

\subsection{Discussion on the boundary condition for practical BIPs}
Theorem~\ref{ISqmc} requires that the function $G_0(z)=f(z)\exp(-n\Psi(z))$ satisfies the boundary condition in Assumption~\ref{assum3s}. Note that
\begin{equation}\label{eq:g0}
    D^a G_0(z) = \sum_{b+c=a} D^{b}
    f(z)D^{c}h_n(z),
\end{equation}
where $a,b,c\in \mbn_0^s$ and $$h_n(z)=\exp(-n\Psi(z))=\exp\left\{-\frac{n}{2}\norm{y-\mathcal{G}(z)}_\Gamma^2\right\}.$$
We now assume that $f(z)$ satisfies Assumption~\ref{assum3s} with the growth rate $M=M_f$.
This is the case of estimating moments ($f(z)=z_i^k$), moment generating function ($f(z)=\exp(t^Tz)$), characteristic function ($f(z)=\exp(it^Tz)$) of the posterior distribution with an arbitrarily small $M_f>0$. It remains to check whether $h_n(z)$ satisfies the boundary condition in Assumption~\ref{assum3s} for practical BIPs.

First of all, if the response operator $\mathcal{G}$ is linear, say $\mathcal{G}(z)= Az$ for a matrix $A$, then it is easy to see that $h_n(z)=\exp\{-(n/2)\norm{y-Az}_\Gamma^2\}$ has bounded partial derivatives. In this case, by \eqref{eq:g0}, $G_0(z)$ satisfies the boundary condition with the same rate $M_f$ for $f(z)$. However, this case is of less interest since the posterior is exactly Gaussian.

We next consider a more general case of $\mathcal{G}$ that includes the linear operator as a special case. Recall that $\mu^*$ given by \eqref{eq:mustar} is the minimizer of $\Psi(z)$ in $\mbr^s$. By taking the second order Taylor polynomial of $\Psi(z)$ around $\mu^*$ arrives at
\begin{equation}\label{eq:remainer}
    \Psi(z) =  \frac{1}{2}||z-\mu^*||_{\Sigma^*}^2 +R(z),
\end{equation}
where $R(z)$ is the remainder,  $\Sigma^*=(\nabla^2\Psi(\mu^*))^{-1}$, and we use the default setting of $\Psi(\mu^*)=0$. We  assume that  $\Psi(z)$ has a lower bound as shown below. 

\begin{Assumption}\label{5}
    Assume that there exists a constant $\delta\in [0,1]$ such that 
    $$\Psi(z) \ge \frac{\delta}{2}\norm{z-\mu^*}^2_{\Sigma^*}\text{ for all }z\in\mbr^s.$$
\end{Assumption}

Assumption~\ref{5} holds trivially when $\delta=0$ since $\Psi(z)=(1/2)\norm{y-\mathcal{G}(z)}_\Gamma^2\ge0$. The case of $\delta=0$ is included in Assumption~\ref{5} for studying more general models in the following.

\begin{theorem}\label{thm:g0}
    Suppose that $f(z)$ and $\Psi(z)$ satisfy  Assumption~\ref{assum3s} with  $M=M_f$ and  $M=M_\Psi$, respectively. If Assumption~\ref{5} holds, then  Assumption~\ref{assum3s} holds for $G_0(z)=f(z)\exp(-n\Psi(z))$ with any growth rate $M>sM_\Psi+M_f-\delta n/(2\lambda_{\max}(\Sigma^*))$.
\end{theorem}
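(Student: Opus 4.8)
The plan is to bound $|D^aG_0(z)|$ for every multi-index $a$ with $|a|\le s$ by combining the Leibniz rule, the Faa di Bruno formula, and the coercivity supplied by Assumption~\ref{5}. Writing $G_0=f\cdot h_n$ with $h_n(z)=\exp(-n\Psi(z))$, I would start from the product expansion already recorded in \eqref{eq:g0}, namely $D^aG_0(z)=\sum_{b+c=a}\binom{a}{b}D^bf(z)\,D^ch_n(z)$, so that it suffices to control each $D^bf$ and each $D^ch_n$ and then sum the finitely many terms. The factor $D^bf$ is immediate from the hypothesis on $f$: since $|b|\le|a|\le s$, Assumption~\ref{assum3s} gives $|D^bf(z)|\le B_f\exp\{M_f\|z\|^2\}$.

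The substantive work is bounding $D^ch_n$. I would apply the Faa di Bruno formula to the composition $\exp(-n\Psi)$, writing $D^ch_n(z)=h_n(z)\sum_{P}\prod_{B\in P}(-n)D^B\Psi(z)$, where $P$ runs over the finitely many set partitions of the index set attached to $c$ and $B$ over its blocks. Each block obeys $|B|\le|c|\le s$, so the hypothesis on $\Psi$ yields $|D^B\Psi(z)|\le B_\Psi\exp\{M_\Psi\|z\|^2\}$; since each partition has at most $|c|\le s$ blocks, the product over blocks contributes at most $\exp\{sM_\Psi\|z\|^2\}$, multiplied by a constant depending only on $n$ and $s$. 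I expect this combinatorial bookkeeping of the number of derivative factors, which is what pins down the coefficient $sM_\Psi$ in the exponent, to be the main obstacle; the remaining estimates are routine.

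It then remains to absorb the surviving factor $h_n(z)=\exp(-n\Psi(z))$, and here Assumption~\ref{5} is essential. It gives $\Psi(z)\ge\frac{\delta}{2}\norm{z-\mu^*}_{\Sigma^*}^2\ge\frac{\delta}{2\lambda_{\max}(\Sigma^*)}\|z-\mu^*\|^2$, using $\norm{v}_{\Sigma^*}^2\ge\|v\|^2/\lambda_{\max}(\Sigma^*)$. To convert the shifted norm into $\|z\|^2$ I would invoke a Young-type inequality: for any $\theta\in(0,1)$, $\|z-\mu^*\|^2\ge(1-\theta)\|z\|^2-(\tfrac1\theta-1)\|\mu^*\|^2$, the last term being a harmless constant. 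This produces $h_n(z)\le C_\theta\exp\{-\frac{n\delta(1-\theta)}{2\lambda_{\max}(\Sigma^*)}\|z\|^2\}$.

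Combining the three estimates, every Leibniz term is dominated by a constant times $\exp\{(M_f+sM_\Psi-\frac{n\delta(1-\theta)}{2\lambda_{\max}(\Sigma^*)})\|z\|^2\}$, and summing the finitely many terms preserves this growth. Hence $G_0$ satisfies Assumption~\ref{assum3s} with growth rate $M_f+sM_\Psi-\frac{n\delta(1-\theta)}{2\lambda_{\max}(\Sigma^*)}$ for each $\theta\in(0,1)$; letting $\theta\to0^+$ shows the conclusion holds for any $M>sM_\Psi+M_f-\frac{\delta n}{2\lambda_{\max}(\Sigma^*)}$, as claimed.
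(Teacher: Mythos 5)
Your proposal is correct and follows essentially the same route as the paper's proof: Leibniz on $G_0=f\cdot h_n$, Faa di Bruno on $h_n=\exp(-n\Psi)$ with the count of at most $s$ derivative factors producing the coefficient $sM_\Psi$, and Assumption~\ref{5} together with $\norm{v}_{\Sigma^*}^2\ge\|v\|^2/\lambda_{\max}(\Sigma^*)$ to absorb $h_n$. The only difference is that you make explicit, via the Young-type inequality in $\theta$, the passage from $\|z-\mu^*\|^2$ to $\|z\|^2$, a step the paper leaves implicit in the strict inequality $M>sM_\Psi+M_f-\delta n/(2\lambda_{\max}(\Sigma^*))$.
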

\begin{proof}
 By the Faa di Bruno formula, we have 
\begin{equation}\label{eq:dhn}
    D^a h_n(z) = \exp(-n\Psi(z))\sum_{P\in \Pi(a)}(-n)^{|P|} \prod_{\beta\in  P}(D^{\beta}\Psi)(z),
\end{equation}
where $\Pi(a)$ denotes the set of all $P=(a^{(1)},\dots,a^{(k)})$ satisfying $\sum_{i=1}^ka^{(i)}=a$ and $0\neq a^{(i)}\in \mathbb{N}_0^s$, and $|P|:=k\le |a|\le s$. Under the conditions in the theorem, we have
\begin{align*}
    |D^{a} h_n(z)| &\lesssim  n^s\exp\left\{-\frac{\delta n}{2}||z-\mu^*||_{\Sigma^*}^2+sM_\Psi||z||^2\right\}.
\end{align*}
 By \eqref{eq:g0} and $|D^a f(z)|=O(\exp(M_f||z||^2))$ for $|a|\le s$, we have 
$$|D^a G_0(z)|\lesssim \exp\left\{-\frac{\delta n}{2}||z-\mu^*||_{\Sigma^*}^2+(sM_\Psi+M_f)||z||^2\right\}\lesssim\exp\{M\norm{z}^2\},$$
for any $M>sM_\Psi+M_f-\delta n\lambda_{\min}((\Sigma^*)^{-1})/2=sM_\Psi+M_f-\delta n/(2\lambda_{\max}(\Sigma^*))$.
\end{proof}


Assumption \ref{5} draws inspiration from \cite{helin2022non}, which establishes an upper limit on the unnormalized log-posterior density, in contrast to focusing on the log-likelihood. In this context, the constant $\delta$ acts as a scaling factor and can be chosen to be arbitrarily small. With $\delta > 0$, Assumption \ref{5} restricts our analysis to likelihoods whose tail decay is not slower than that of a Gaussian distribution. For the linear operator $\mathcal{G}$, Assumption \ref{5} holds trivially, while under the nonlinear case, this assumption with 
$\delta>0$  also holds under certain conditions on the response operator $\mathcal{G}$.

\begin{Example}\label{exam:nonlinear}
  The response operator $\mathcal{G}$ is given by a linear mapping with a small nonlinear perturbation $\mathcal{G}(z) = Az + \tau F(z)$, where $A\in \R^{s\times s}$, $F(z) = (F_1(z),\dots,F_s(z))^T\in \R^s$, and $\tau\ge 0$. This example was studied in \cite{helin2022non}. For this case, $\Psi(z) =\frac 12 \norm{Az + \tau F(z)-y}_{\Gamma}^2$. Following the proof of  Proposition 5.6 of \cite{helin2022non}, we can verify Assumption \ref{5} with  $\delta>0$ if there exist $\tau_0,C_1,C_2>0$  such that 
$$\sup\left\{\left\lvert\sum_{i=1}^s \partial_{z_i} F_k(z)x_i\right\rvert:\norm{x}_{\Sigma^*}\le 1\right\}\le C_1,$$
and
$$\sup\left\{\left\lvert\sum_{i=1}^s \sum_{j=1}^s\partial_{z_iz_j} F_k(z)x_iy_j\right\rvert:\norm{x}_{\Sigma^*}\le 1,\norm{y}_{\Sigma^*}\le 1\right\}\le C_2,$$
for all $k=1,\dots,s$, $z\in \R^s$ and $\tau\in[0,\tau_0]$. It is easy to see that if all $F_k(z)$ satisfy Assumption~\ref{assum3s} with a growth rate $M_F$, then $\Psi(z)$ satisfies Assumption~\ref{assum3s} with the growth rate $M_\Psi = 2M_F$. The conditions in Theorem~\ref{thm:g0} are therefore satisfied.
\end{Example}

\begin{Example}
Consider the model inverse problem of determining the distribution of the random diffusion coefficient of a divergence form elliptic PDEs from observations of a finite set of noisy continuous functionals of the solution. The forward problem is an elliptic PDEs given by
\begin{equation}\label{eq:EPDE}
    \begin{aligned}
    -\nabla_x\cdot (a(x,\omega)\nabla_x p(x,\omega)) &= g(x)\ \text{in }D,\\
    p(x,\omega) &= 0 \text{ on }\partial D,
\end{aligned}
\end{equation}
with almost every $\omega\in\Omega$ in a complete probability space $(\Omega,\mathcal{F},\mathbb{P})$,
where $D\subseteq \R^d$ $(d=1,2,3)$ is a bounded Lipschitz domain, $\partial D$ is its boundary, and $a(x,\omega)$ is the diffusion coefficient.
We focus on lognormal random coefficients with finite-dimensional parameter vectors
\begin{equation}
   a(x,\omega)= a_s(x,z) := a_*(x) + a_0(x)\exp\left\{\sum_{j=1}^s z_j \xi_j(x)\right\},
\end{equation}
where $a_*(x)\ge 0$, $a_0(x)>0$ for all $x\in D$,  $b_j=\norm{\xi_j(x)}_{L^\infty(\Bar{D})}<\infty$, and $z_j\sim N(0,1)$ independently.
We study the problem \eqref{eq:EPDE} in its weak form: seeking a solution $p(\cdot,z)\in V:=H_0^1(D)$ such that 
$$\int_D a_s(x,z)\nabla_x p(x,z)\cdot \nabla_x v(x)d x=\int_D g(x)\cdot v(x)d x$$
for all $v\in V$ and almost $z\in \Omega$.
The response operator is given by $\mathcal{G}(z)=\mathcal{H}(p(\cdot,z))\in \R^J$ for an observation operator $\mathcal{H}:V\to \R^J$.
Our goal is to estimate the posterior expectation of $f(z) = \mathcal{T}(p(\cdot,z))$, where $p(x,z)$ is the  solution of \eqref{eq:EPDE} and $\mathcal{T}\in V'$ the dual space of $V$. Scheichl et al. \cite{bib38scheichl2017quasi} studied the error rate of randomly shifted lattice rule for this BIP in which the prior is served as the proposal (i.e., PriorIS). However, that work  considers neither other proposals nor the effect of noise level. In this paper, we are able to provide an RMSE rate for general Gaussian proposals by verifying   Assumption~\ref{assum3s} for $f(z)$ and $\Psi(z)$. Due to the linearity of $\mathcal{T}$ in formulating $f(z)$, using the following result established in \cite{graham:2015} 
$$\norm{D^\beta p(\cdot,z)}_V\le \frac{|\beta|!}{(\ln 2)^{|\beta|}}\left(\prod_{j=1}^s b_j^{\beta_j}\right)\frac{\norm{g}_{V'}}{a_{\min}(z)},$$
where $a_{\min}(z)=\min_{x\in\bar D}a_s(x,z)$ and $\beta\in \mathbb{N}_0^s$, we have
\begin{align*}
|D^\beta f(z)|&\le \norm{\mathcal{T}}_{V'}\norm{D^\beta p(\cdot,z)}_V\\&\le \frac{|\beta|!}{(\ln 2)^{|\beta|}}\left(\prod_{j=1}^s b_j^{\beta_j}\right)\frac{\norm{\mathcal{T}}_{V'}\norm{g}_{V'}}{a_{\min}(z)}\\&\lesssim 1/a_{\min}(z)\lesssim \prod_{j=1}^s \exp(b_j|z_j|),
\end{align*}
where the last inequality comes from the fact that $\min_{x\in \bar D}\xi_j(x)\ge -b_j$. Therefore, $|D^\beta f(z)|=O(\exp\{M\norm{z}^2\})$ for any $M>0$. Now consider $\Psi(z) = \frac 1 2 \norm{y-\mathcal{H}(p(\cdot,z))}_\Gamma^2$. Note that $\Psi(z)$ is not a linear function of $p$. However, $D^\beta \Psi(z)$ is a linear combination of terms of the form $D^{\beta^{(1)}}\mathcal{H}_i(p(\cdot,z))D^{\beta^{(2)}}\mathcal{H}_j(p(\cdot,z))$ for $i,j=1,\dots,J$. As a result,  $|D^\beta \Psi(z)|\lesssim \prod_{j=1}^s \exp(b'_j|z_j|)$ for some $b'_j$ if all $\mathcal{H}_i$ are linear functions of $p$. Assumption~\ref{assum3s} thus holds for $f(z)$ and $\Psi(z)$ with any $M>0$. For this example, we are not able to verify Assumption~\ref{5} with $\delta>0$. To apply our results, we can take $\delta=0$. By Theorem~\ref{thm:g0}, $G_0(z)$ satisfies Assumption~\ref{assum3s} with any $M>0$. Together with Corollary~\ref{cor:gaussian}, ODIS and PriorIS yields an RMSE of $O(N^{-1+\epsilon})$. But it fails to provide an error rate for LapIS since we take $\delta=0$ for this model (see Remark~\ref{rem:qmcrate} for some discussions). It would be interesting to work out a $\delta>0$ or relax Assumption~\ref{5}. In the next subsection, we work with a weaker condition compared to Assumption~\ref{5}. As shown in  \cite{graham:2015}, these results also hold for finite element solutions of \eqref{eq:EPDE}. Moreover,  \cite{graham:2015} showed the quadrature error decays with $O(N^{-1+\epsilon})$ with the implied constant  independent of the dimension $s$ by choosing proper weight parameters $\gamma_u$.
\end{Example}

\subsection{Concentration effects on the RKHS norm}
In this subsection, we assess the stability of the norm $\|\cdot\|_{\mathcal{F}}$ with respect to the noise level $n$. We distinguish among different importance densities and among scenarios with finite and infinite $n$. As $n$ increases, employing the Gaussian family for IS may result in functions that fall outside the RKHS. Consequently, this necessitates a reassessment of the asymptotic properties of the convergence rate with respect to $n$.

Prior to our analysis, we establish certain assumptions regarding the function $\Psi(z)$ to ensure analytical tractability. For the asymptotic analysis of integration, we utilize the classical Laplace method, as detailed in Wong's work \cite{wong2001asymptotic}. This method has been previously applied in \cite{bib1} for the investigation of Laplace-based MC methods in BIPs.

\begin{Assumption}\label{Convex}
Denote
\begin{equation}\label{eq:jn}
    J_n= \int_{\R^s} Q(z)\exp{(-n F(z))dz}.
\end{equation}
Assume that
\begin{enumerate}
    \item $F(z)$ has a global minimizer $c^*\in \mbr^s$ and   $\nabla^2F(c^*)$ is positive definite,
    \item the function $Q(z)$ is $2p+2$ times continuously differentiable and $F(z)$ is $2p+3$ times continuously differentiable in the neighborhood of $c^*$ for a $p\geq 0$, and
    \item  the integral $J_n$ given by \eqref{eq:jn} converges absolutely for each $n \in \mathbb{N}$.
\end{enumerate}
\end{Assumption}

\begin{theorem}\label{thm1}
If Assumption \ref{Convex} holds,
then we have 
\begin{equation*}
    J_n= \exp\{-nF(c^*)\}n^{-\frac{s}{2}}\bigg(\sum_{k=0}^{p}c_{k}(Q)n^{-k} + O(n^{-p-1}) \bigg),
\end{equation*}
where
\begin{equation}\label{coe}
    c_k(Q) = \sum_{a \in \mathbb{N}_0^s:|a|=2k} \frac{\kappa_{a}}{a!}D^a H(0)\in \R,
\end{equation}
$\kappa_{a}\in \R$, 
$H(z):= Q(h(z))\det(\nabla h(z))$ with $h:\omega \longrightarrow U(c^*)$ is a diffeomorphism between $0\in\omega$ and a neighborhood $U(c^*)$ of the  minimizer $c^*$ mapping $h(0)=c^*$ with $\det(\nabla h(0))=1$. When $Q(z)=q_0(z)\prod_{i=1}^Kq_i(z)$ with $q_i(c^*)=0$ for all $i=1,\dots,K$,  $Q(z)$ is said to possess a zero of order $K$ under which $c_k(Q)=0$ for $k=0,\dots,\lfloor K/2\rfloor$ so that $J_n\sim \exp\{-nF(c^*)\}n^{-s/2-\lfloor K/2\rfloor}$.
\end{theorem}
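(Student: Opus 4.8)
The plan is to recognize Theorem~\ref{thm1} as the multivariate Laplace approximation and to follow the classical development in Wong~\cite{wong2001asymptotic}, adapted to the notation of Assumption~\ref{Convex}. The first step is \emph{localization}. Since $c^*$ is the global minimizer of $F$ with $\nabla^2 F(c^*)$ positive definite, there exist $\delta>0$ and a neighborhood $U(c^*)$ on which $F(z)-F(c^*)\ge \delta\norm{z-c^*}^2$, while on the complement $F(z)-F(c^*)$ stays bounded away from $0$. Splitting the integral as $J_n=\int_{U(c^*)}+\int_{\R^s\setminus U(c^*)}$, the absolute convergence from Assumption~\ref{Convex}(3), combined with the strict separation of $F$ from its minimum outside $U(c^*)$, shows the outer integral is $O(e^{-nF(c^*)}e^{-cn})$ for some $c>0$, hence negligible against every algebraic order $n^{-s/2-p-1}$.

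Next I would \emph{straighten} the phase on $U(c^*)$. Because $F$ is $C^{2p+3}$ near $c^*$ with positive definite Hessian there, the Morse lemma supplies a $C^{2p+2}$ diffeomorphism $h:\omega\to U(c^*)$ with $h(0)=c^*$, $\det(\nabla h(0))=1$, normalized so that $F(h(w))=F(c^*)+\tfrac12\norm{w}^2$. Substituting $z=h(w)$ gives
\begin{equation*}
   J_n = e^{-nF(c^*)}\int_\omega H(w)\,e^{-\frac n2\norm{w}^2}\,dw + O(e^{-nF(c^*)}e^{-cn}),
\end{equation*}
where $H(w)=Q(h(w))\det(\nabla h(w))$ is $C^{2p+2}$, exactly the function appearing in \eqref{coe}.

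The core step is then to Taylor-expand $H$ about $0$ to order $2p+1$ and integrate termwise against the Gaussian weight. Writing $H(w)=\sum_{|a|\le 2p+1}\frac{D^aH(0)}{a!}w^a$ plus a remainder, the scaled Gaussian moments $\int_{\R^s} w^a e^{-\frac n2\norm{w}^2}\,dw$ vanish whenever some $a_i$ is odd and otherwise equal $\kappa_a\,n^{-s/2-|a|/2}$ for constants $\kappa_a$; thus only even multi-indices $|a|=2k$ survive, producing the coefficients $c_k(Q)=\sum_{|a|=2k}\frac{\kappa_a}{a!}D^aH(0)$ and the powers $n^{-k}$. Extending the integration from $\omega$ back to $\R^s$ costs only another exponentially small term, and bounding the remainder by $C\norm{w}^{2p+2}$ (using $H\in C^{2p+2}$) and integrating against the Gaussian yields the stated $O(n^{-p-1})$ error. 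I expect the main obstacle to be making this remainder estimate uniform over $\omega$ and matching it cleanly to the claimed order, since this is precisely where the smoothness budget $C^{2p+2}$ for $Q$ and $C^{2p+3}$ for $F$ is consumed.

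Finally, for the statement on a zero of order $K$, I would track how the factorization $Q=q_0\prod_{i=1}^K q_i$ with $q_i(c^*)=0$ propagates through the change of variables. Since $h(0)=c^*$, each $q_i(h(w))$ vanishes at $w=0$, so $Q(h(w))$, and therefore $H(w)=Q(h(w))\det(\nabla h(w))$, vanishes to order at least $K$ at the origin. Consequently $D^aH(0)=0$ for every multi-index with $|a|<K$, which forces $c_k(Q)=\sum_{|a|=2k}\frac{\kappa_a}{a!}D^aH(0)=0$ as soon as $2k<K$. The first surviving coefficient then fixes the leading order, giving $J_n\sim e^{-nF(c^*)}n^{-s/2-\lfloor K/2\rfloor}$.
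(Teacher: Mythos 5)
The paper offers no inline proof of Theorem~\ref{thm1} for you to be compared against: its ``proof'' is the single line ``See Theorem 1 of \cite{bib1}.'' Your sketch reconstructs precisely the classical multivariate Laplace argument that underlies that citation (and Wong \cite{wong2001asymptotic}): localization, Morse-type straightening of the phase, termwise integration of the Taylor expansion of $H$ against the Gaussian kernel so that only even multi-indices survive, and the order-of-vanishing argument for a zero of order $K$. Structurally this is the right---and essentially the standard---proof, and it supplies the argument the paper outsources.

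Two places need repair, and one inherited blemish is worth flagging. First, localization: you assert that $F(z)-F(c^*)$ ``stays bounded away from $0$'' on the complement of $U(c^*)$, presenting this as a consequence of global minimality plus the positive definite Hessian. It is not; it is an additional hypothesis, and without it the conclusion can fail outright. With $s=1$, $F(c^*)=0$, take $F(z)=1/z$ and $Q(z)=z^{-\beta-1}$ for large $z$ (both smooth, $F>0$ away from $c^*$, and $J_n$ absolutely convergent for every $n$); then
\begin{equation*}
\int_{z>1} z^{-\beta-1}e^{-n/z}\,dz \;=\; n^{-\beta}\int_{1/n}^{\infty}u^{-\beta-1}e^{-1/u}\,du \;\sim\; \Gamma(\beta)\,n^{-\beta},
\end{equation*}
which dominates the interior contribution of order $n^{-1/2}$ whenever $\beta<1/2$, destroying the expansion. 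The separation condition $\inf_{z\notin U(c^*)}F(z)>F(c^*)$ is part of the hypotheses of Theorem 1 in \cite{bib1}; your proof must assume it explicitly (as, arguably, should Assumption~\ref{Convex}). Second, the smoothness bookkeeping: from $F\in C^{2p+3}$ the standard Morse lemma loses \emph{two} derivatives, so $h\in C^{2p+1}$ and hence $H=Q(h)\det(\nabla h)\in C^{2p}$, not $C^{2p+2}$ as you claim. Taylor expansion of a $C^{2p}$ function only gives a remainder $o(n^{-p})$ after integration, one order short of the stated $O(n^{-p-1})$; closing this requires either a sharper Morse-type lemma or the alternative classical route of expanding the phase directly (splitting $F$ into its quadratic part plus a remainder $R$ and expanding $e^{-nR}$ on a shrinking ball), which is where the budget $F\in C^{2p+3}$, $Q\in C^{2p+2}$ is actually spent. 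Finally, your last step inherits an off-by-one from the paper's own statement: from $D^aH(0)=0$ for $|a|<K$ you correctly conclude $c_k(Q)=0$ for $2k<K$, whose sharp consequence is $J_n=O(\exp\{-nF(c^*)\}\,n^{-s/2-\lceil K/2\rceil})$; this matches $n^{-s/2-\lfloor K/2\rfloor}$ only for even $K$, while the paper's range ``$k=0,\dots,\lfloor K/2\rfloor$'' of vanishing coefficients is itself an overstatement for even $K$.
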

\begin{proof}
    See Theorem 1 of \cite{bib1}.
\end{proof}

\begin{theorem}\label{thm9}
If $\mu$ and $\Sigma$ used in IS do not depend on $n$ and Assumption \ref{Convex} holds with $F(z)=2\Psi(z)$ and $$Q(z)=Q_1(z)=\exp\{-\norm{z-\mu}_{\Sigma}^2/\alpha\}\left(D^c g_1(z)\prod_{ \beta\in  P}D^{\beta}\Psi(z)\right)^2$$ with $g_1(z)=f(z)W(z)$ for all $c\in\mbn_0^s$ with $|c|\le s$ and  $P=(a^{(1)},\dots,a^{(k)})$ satisfying $0\neq a^{(i)}\in \mathbb{N}_0^s$ and $\sum_{i=1}^k|a^{(i)}|\le s$, then we have
$$\|\gis(z)\|_{\mathcal{F}}=O(n^{s/4}).$$
If $\mu=\mu^*$ and $\Sigma=(m/n)\Sigma^*$ for a constant $m>0$ and Assumption \ref{Convex} holds with $F(z)=2\Psi(z)-(1/m-1/\alpha)\norm{z-\mu^*}_{\Sigma^*}^2$ whose global minimizer is $\mu^*$, and $$Q(z)=Q_2(z)=\left(D^c g_2(z)\prod_{ \beta\in  P}D^{\beta}\Psi_1(x)\right)^2$$ with $g_2(x)=f(x)\pi_0(x)$ and $\Psi_1(x)=\Psi(x)-\frac 1{2m}\norm{x-\mu^*}_{\Sigma^*}^2$ for all possible $c$ and $P$ as before, then we have $$\|\gis(z)\|_{\mathcal{F}}=O(n^{-s/2}).$$
\end{theorem}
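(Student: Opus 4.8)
The plan is to bound each additive term of the squared norm \eqref{rkhs} and then feed the resulting integral into the Laplace asymptotics of Theorem~\ref{thm1}. Writing
\[
T_u := \int_{\R^{|u|}}\left(\int_{\R^{s-|u|}}\partial^u\gis(z)\,\phi_{-u}(z)\,dz_{-u}\right)^2\psi_u^2(z)\,dz_u,
\]
I would first apply Jensen's inequality (the density $\phi_{-u}$ integrates to one in $z_{-u}$) to pull the square inside, giving $T_u\le\int_{\R^s}(\partial^u\gis(z))^2\phi_{-u}(z)\psi_u^2(z)\,dz$. Since $\alpha>2$ forces $\exp(-t^2/2)\le\exp(-t^2/\alpha)$, the mixed weight is dominated, $\phi_{-u}(z)\psi_u^2(z)\lesssim\exp(-\norm{z}^2/\alpha)$, so it remains to control $\int_{\R^s}(\partial^u\gis(z))^2\exp(-\norm{z}^2/\alpha)\,dz$ for each fixed $u$.

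Next I would expand $\partial^u\gis$. Since $x=\mu+Lz$ is affine, $\partial^u$ is a fixed linear combination of Cartesian derivatives of $G_0W$ of order $|a|=|u|$; the accompanying constant is $O(1)$ in Case~1 but carries a factor $(m/n)^{|u|/2}$ in Case~2, where $L=\sqrt{m/n}\,L^*$. After grouping the polynomial-in-$x$ likelihood-ratio factors with $f$ — namely $g_1=fW$ in Case~1, and after rewriting $\exp(-n\Psi)W$ as $(m/n)^{s/2}$ times $\exp(-n\Psi_1)$ with $g_2=f\pi_0$ in Case~2 — I would apply the Fa\`a di Bruno formula \eqref{eq:dhn} to the remaining exponential factor. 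This expresses $\partial^u\gis$ as a finite sum of terms proportional to $n^{|P|}D^cg_1(x)\left(\prod_{\beta\in P}D^\beta\Psi(x)\right)\exp(-n\Psi(x))$ in Case~1 (and the analogue with $g_2,\Psi_1$ in Case~2), where $P=(a^{(1)},\dots,a^{(k)})$ satisfies $\sum_i|a^{(i)}|\le|u|$. Squaring and changing variables back to $x$ — with Jacobian $|\Sigma|^{-1/2}$, constant in Case~1 but equal to $(n/m)^{s/2}|\Sigma^*|^{-1/2}$ in Case~2 — produces exactly $J_n=\int_{\R^s}Q_i(x)\exp(-nF(x))\,dx$ with the $F$ and $Q_i$ in the statement. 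The resulting Gaussian weight $\exp(-\norm{x-\mu}_\Sigma^2/\alpha)$ is $n$-free in Case~1 and is kept inside $Q_1$, whereas in Case~2 it equals $\exp(-(n/m)\norm{x-\mu^*}_{\Sigma^*}^2/\alpha)$ and is absorbed into the exponent $F$, which is why the two cases carry different $F$ and $Q_i$.

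With this reduction I would invoke Theorem~\ref{thm1}, whose hypotheses hold by assumption. The minimizer is $c^*=\mu^*$ and $F(\mu^*)=2\Psi(\mu^*)=0$, so $J_n\sim n^{-s/2-\lfloor K/2\rfloor}$ with $K$ the order of the zero of $Q_i$ at $\mu^*$. The crucial point is that $\nabla\Psi(\mu^*)=0$ and, by construction, $\nabla\Psi_1(\mu^*)=0$ as well, so every factor $D^\beta\Psi$ (resp.\ $D^\beta\Psi_1$) with $|\beta|=1$ vanishes at $\mu^*$; if $k_1$ of the $|P|$ factors have $|\beta|=1$, then $Q_i$, being a square, has a zero of order $K=2k_1$ and $J_n=O(n^{-s/2-k_1})$. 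Collecting powers of $n$: in Case~1, $T_u\lesssim n^{2|P|}n^{-s/2-k_1}=n^{(k_1+2k_{\ge2})-s/2}$, while in Case~2 the prefactors $(m/n)^{s+|u|}$ and the Jacobian $(n/m)^{s/2}$ give $T_u\lesssim n^{-s-|u|+(k_1+2k_{\ge2})}$, where $k_{\ge2}$ counts the factors with $|\beta|\ge2$. Since $|a^{(i)}|\ge1$ and $\ge2$ respectively, the degree budget $\sum_i|a^{(i)}|\le|u|\le s$ yields $k_1+2k_{\ge2}\le|u|\le s$, whence $T_u=O(n^{s/2})$ in Case~1 and $T_u=O(n^{-s})$ in Case~2. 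Summing the finitely many $T_u$ and taking square roots gives $\norm{\gis}_{\mathcal{F}}=O(n^{s/4})$ and $O(n^{-s/2})$, respectively.

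I expect the main obstacle to be the exact bookkeeping of the second and third steps: one must verify that the largest power of $n$ — coming from many Fa\`a di Bruno factors ($|P|=k$ large) — is always compensated by an equally high-order zero of $Q_i$ at $\mu^*$ (large $k_1$). The identity $2|P|-k_1=k_1+2k_{\ge2}$ weighed against the budget $k_1+2k_{\ge2}\le|u|$ is what makes the exponent collapse to the stated rates, so confirming that the $n$-dependent change of variables in Case~2 contributes precisely the claimed differentiation and Jacobian powers, and that the combinatorial constraints on $|P|,k_1,k_{\ge2}$ are exactly right, is where the care is needed.
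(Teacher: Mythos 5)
Your proposal is correct and follows essentially the same route as the paper's proof: bound each term of the squared norm via Jensen's inequality and domination of the mixed weight by $\exp(-\norm{z}^2/\alpha)$, expand $\partial^u\gis$ through the affine chain rule plus the Fa\`a di Bruno formula \eqref{eq:dhn}, change variables back to $x$ to recognize the Laplace-type integrals $J_n$ with the stated $F$ and $Q_i$, and invoke Theorem~\ref{thm1} using that the factors $D^\beta\Psi$ (resp.\ $D^\beta\Psi_1$) with $|\beta|=1$ vanish at $\mu^*$, so $Q_i$ has a zero of order $2|P|_1$ (your $2k_1$), closing with the combinatorial budget $2|P|-|P|_1=|P|_1+2|P|_{\ge 2}\le |u|\le s$. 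If anything, your bookkeeping is slightly more careful than the paper's own write-up, which misstates the zero order as $2|P|$ in Case~1 and is loose about the $1/(m\alpha)$ versus $1/\alpha$ coefficient when absorbing the weight $\psi^2$ into $F$ in Case~2, but the argument and the resulting rates $O(n^{s/4})$ and $O(n^{-s/2})$ are identical.
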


\begin{proof}
 First, we consider the case that $\mu$ and $\Sigma$ do not depend on the noise level $n$.
Let $$\ggis(x)=f(x)W(x)\exp(-n\Psi(x))=g_1(x)h_n(x),$$ where $g_1(x)=f(x)W(x)$ does not depend on $n$ and again $h_n(x)=\exp(-n\Psi(x))$. Note that $\gis(z)=\ggis(x)$ with $x=\mu+Lz$. Denote $e_i\in \mathbb{N}_0^s$ be the vector  with the $i$th entry one and zero otherwise. Let $\ell=(\ell_1,\dots,\ell_s)\in \{1{:}s\}^s$.
By the Faa di Bruno formula, we have
$$\partial^u \gis(z)=\sum_{\ell_u\in \{1{:}s\}^{|u|}}\left(\prod_{i\in u}D^{e_{\ell_i}}\ggis\right)(x)\prod_{i\in u}L_{\ell_i,i}.$$
Let $\norm{L}_{\max}=\max_{i,j=1,\dots,s}|L_{ij}|$. Then
$$|\partial^u \gis(z)|\le \norm{L}_{\max}^{|u|}\sum_{\ell_u\in\{1{:}s\}^{|u|}}|D^{a(\ell_u)}\ggis(x)|,$$
where $a(\ell_u) = \sum_{i\in u}e_{\ell_i}$ satisfying $|a(\ell_u)|=|u|\le s$.
By \eqref{eq:dhn}, we have
\begin{align}
D^{a(\ell_u)}\ggis(x)&=\sum_{b+c=a(\ell_u)}D^b h_n(x)D^c g_1(x)\\
&=h_n(x)\sum_{b+c=a(\ell_u) \atop P\in \Pi(b)}(-n)^{|P|}D^c g_1(x)\prod_{ \beta\in  P}(D^{\beta}\Psi)(x).
\end{align}

Recall that the RKHS norm is  
\begin{displaymath}
    \begin{aligned}
\|\gis(z)\|_{\mathcal{F}}^2=\sum_{u\subseteq 1{:}s}\frac{F_u(n)}{\gamma_u}.
    \end{aligned}
\end{displaymath}
Among the summation, 
\begin{align}\label{eq:fun1}
F_u(n)&=\int_{\R^u}\left(\int_{\R^{s-|u|}}\partial^u \gis(z)\phi_{-u}(z)dz_{-u}\right)^2\psi_{u}(z)^2dz_u\\\notag
&\le\int_{\R^s}\left(\partial^u \gis(z)\right)^2\psi(z)^2dz\\\notag
&\lesssim\norm{L}_{\max}^{2|u|} \sum_{\ell_u\in \{1{:}s\}^{|u|}} \sum_{b+c=a(\ell_u) \atop P\in \Pi(b)} n^{2|P|}\int_{\R^s}h_n^2(x)\left(D^c g_1(x)\prod_{ \beta\in  P}(D^{\beta}\Psi)(x)\right)^2\psi(z)^2dx\\\notag
&= \frac{\norm{L}_{\max}^{2|u|}}{|\Sigma|^{1/2}} \sum_{\ell_u\in \{1{:}s\}^{|u|}} \sum_{b+c=a(\ell_u) \atop P\in \Pi(b)} n^{2|P|}J_{n},
\end{align}
where the symbol $\lesssim$ is used to hide the constant that does not depend on the noise level $n$, and we use a change of variable $x=\mu+Lz$ and $\psi(z)=\exp\{-\norm{z}^2/(2\alpha)\}$ to obtain
\begin{equation}
    J_{n} = \int_{\R^s}\exp\{-2n\Psi(x)-\norm{x-\mu}_{\Sigma}^2/\alpha\}\left(D^c g_1(x)\prod_{ \beta\in  P}(D^{\beta}\Psi)(x)\right)^2dx.
\end{equation}
Let $$Q_1(x) = \exp\{-\norm{x-\mu}_{\Sigma}^2/\alpha\}\left(D^c g_1(x)\prod_{ \beta\in  P}(D^{\beta}\Psi)(x)\right)^2,$$ and let $|P|_1:=|\{\beta\in P:|\beta|=1\}|$ be the number of elements with $|\beta|=1$ and $\beta\in P$. Taking the case of $b=e_1+e_2+2e_3$ for instant, if $P = \{e_1,e_2,2e_3\}$, then $|P|_1=2$, and if $P=\{e_1,e_2,e_3,e_3\}$, we have $|P|=|P|_1=|b|=4$.
It then follows  
\begin{align*}
n^{2|P|}J_{n} &=n^{2|P|}\int_{\R^s}\exp\{-2n\Psi(x)\}Q_1(x)dx\\
&\lesssim n^{-s/2+2|P|-|P|_1
}\\
&\le n^{-s/2+\max_{P\in\Pi(b)}(2|P|-|P|_1
)}\\
&=n^{|b|-s/2},
\end{align*}
where we use the fact that  $Q_1(x)$ possesses a zero of order $2|P|$ leading to $$J_n=\int_{\R^s}\exp\{-2n\Psi(x)\}Q_1(x)dx=O(  n^{-s/2-|P|_1})$$
by Theorem \ref{thm1} with $c^*=\mu^*$ and $F(\mu^*)=2\Psi(\mu^*)=0$, and $\max_{P\in\Pi(b)}(2|P|-|P|_1)=|b|$. We thus arrive at $\|\gis(z)\|_{\mathcal{F}}^2=O(n^{s/2})$. 

Next consider the case $\mu=\mu^*, \ \Sigma=(m/n)\Sigma^*$ that depends on the noise level $n$ linearly. For this case, $\norm{L}_{\max}=O(n^{-1/2})$ and $|\Sigma|=O(n^{-s})$. Now we rewrite that $$\ggis(x)=(2\pi)^{s/2}|\Sigma|^{1/2}g_2(x)\tilde h_n(x),$$ where $g_2(x)=f(x)\pi_0(x)$ does not depend on $n$ and $\tilde h_n(x)=\exp\{-n(\Psi(x)-\frac 1{2m}\norm{x-\mu^*}_{\Sigma^*}^2)\}=\exp\{-n\Psi_1(x)\}$ with $\Psi_1(x)=\Psi(x)-\frac 1{2m}\norm{x-\mu^*}_{\Sigma^*}^2$. We then have
\begin{align}
D^{a(\ell_u)}\ggis(x)&=(2\pi)^{s/2}|\Sigma|^{1/2}\sum_{b+c=a(\ell_u)}D^b \tilde h_n(x)D^c g_2(x)\\
&=(2\pi)^{s/2}|\Sigma|^{1/2}\tilde h_n(x)\sum_{b+c=a(\ell_u) \atop P\in \Pi(b)}(-n)^{|P|}D^c g_2(x)\prod_{ \beta\in  P}D^{\beta}\Psi_1(x),
\end{align}
and similar to \eqref{eq:fun1}
\begin{align*}
    F_u(n)&\lesssim \norm{L}_{\max}^{2|u|}|\Sigma|^{1/2} \sum_{\ell_u\in \{1{:}s\}^{|u|}} \sum_{b+c=a(\ell_u) \atop P\in \Pi(b)}n^{2|P|}\tilde J_{n}\\
    &\lesssim n^{-|u|-s/2}\sum_{\ell_u\in \{1{:}s\}^{|u|}}\sum_{b+c=a(\ell_u) \atop P\in \Pi(b)}n^{2|P|}\tilde J_{n},
\end{align*}
where 
\begin{equation}
    \tilde J_{n} = \int_{\R^s}\exp\{-2n\Psi_2(x)\}\left(D^c g_2(x)\prod_{ \beta\in  P}D^{\beta}\Psi_1(x)\right)^2dx,
\end{equation}
and $$\Psi_2(x)=\Psi_1(x)-\norm{x-\mu^*}_{\Sigma^*}^2/(2\alpha)=\Psi(x)-\frac12(1/m-1/\alpha)\norm{x-\mu^*}_{\Sigma^*}^2.$$
Since $\mu^*$ attains the minima of $\Psi_2(x)$, $\Psi_2(\mu^*)=\Psi(\mu^*)=0$.
Let $$Q_2(x) =\left(D^c g_1(x)\prod_{ \beta\in  P}D^{\beta}\Psi_1(x)\right)^2.$$
Then similarly,
\begin{align*}
n^{2|P|}\tilde J_{n} &=n^{2|P|}\int_{\R^s}\exp\{-2n\Psi_2(x)\}Q_2(x)dx\lesssim n^{|b|-s/2},
\end{align*}
where we use the fact that $Q_2(x)$ possesses a zero of order $2|P|_1$.
This leads to
\begin{align*}
    F_v(n)\lesssim n^{-|v|-s/2}n^{|b|-s/2}
    \le n^{-s/2-s/2}.
\end{align*}
We thus arrive at $\|\gis(z)\|_{\mathcal{F}}^2=O(n^{-s})$. 
\end{proof}

\begin{corollary}\label{cor:rates}
Suppose that $f(z)$ and $\Psi(z)$ satisfy  Assumption~\ref{assum3s} with  $M=M_f$ and  $M=M_\Psi$, respectively, and Assumption~\ref{5} are satisfied for $\Psi(z)$. If $\mu$ and $\Sigma$ do not depend on $n$ and 
$$\gamma_{n,1}:=\frac{n\delta\lambda_{\min}(\Sigma)}{\lambda_{\max}(\Sigma^*)}+\frac{\lambda_{\min}(\Sigma)}{\lambda_{\max}(\Sigma_0)}+2(M_f+sM_\Psi)\lambda_{\min}(\Sigma)>\frac{1}{2},$$
then for any arbitrarily small $\epsilon>0$,
$$\sqrt{E^\Delta |I_{s,\phi}(\gis)-\hat I_{s}(\gis\circ \Phi^{-1})|^2}\le C_{\epsilon,s}n^{s/4}N^{-\min(\gamma_{n,1},1)+\epsilon},$$
where $C_{\epsilon_n,s}>0$ depends only on $\epsilon$ and $s$. Assuming that $\mu=\mu^*$ and $\Sigma=(m/n)\Sigma^*$ and $F(z)=2\Psi(z)-(1/m-1/\alpha)\norm{z-\mu^*}_{\Sigma^*}^2$ has a global minimizer $\mu^*$, and 
$$\gamma_{n,2}:=1+\delta-\frac{1}{m}+\frac{\lambda_{\max}(\Sigma^*)}{n\lambda_{\max}(\Sigma_0)}-\frac{2\lambda_{\max}(\Sigma^*)(M_f+sM_\Psi)}{n}>\frac{1}{2},$$
then $$\sqrt{E^\Delta |I_{s,\phi}(\gis)-\hat I_{s}(\gis\circ \Phi^{-1})|^2}\le C_{\epsilon,s}n^{-s/2}N^{-\min(\gamma_{n,2},1)+\epsilon}.$$
\end{corollary}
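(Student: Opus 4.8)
The plan is to read the statement as the composition of two error contributions that have already been isolated in the preceding results: the bound of Theorem~\ref{thm2.2} (in the form used inside Theorem~\ref{ISqmc}) gives a randomly shifted lattice RMSE bounded by $e_s^{\rm sh}(z^*)\,\|\gis\|_{\mathcal{F}}$, where the lattice factor $e_s^{\rm sh}(z^*)$ carries all the dependence on the sample size $N$ and the weight parameter $\alpha$, while $\|\gis\|_{\mathcal{F}}$ carries the dependence on the noise level $n$. I would therefore establish each of the two displayed bounds by (i) extracting the $N$-exponent $\min(\gamma_{n,i},1)$ from the RKHS-membership analysis, and (ii) multiplying by the $n$-scaling of $\|\gis\|_{\mathcal{F}}$ supplied by Theorem~\ref{thm9}. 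The only genuinely new work is to make the growth rate of the integrand explicit in terms of the problem data and to verify that the implied constant can be taken independent of $n$.

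For the first bound ($\mu$ and $\Sigma$ independent of $n$), I would start from Theorem~\ref{thm:g0}: under Assumptions~\ref{assum3s} and~\ref{5}, the product $G_0(z)=f(z)\exp(-n\Psi(z))$ obeys the growth condition \eqref{eq:condition33} with any rate $M>sM_\Psi+M_f-\delta n/(2\lambda_{\max}(\Sigma^*))$, the $-\delta n$ contribution coming precisely from the Gaussian-type tail decay guaranteed by Assumption~\ref{5}. Substituting this $M$ into the exponent $\gamma=\lambda_{\min}(\Sigma)\bigl(1/\lambda_{\max}(\Sigma_0)-2M\bigr)$ of Theorem~\ref{ISqmc} and collecting the three resulting contributions (the $n\delta$ term, the prior term, and the growth-rate term) produces the threshold $\gamma_{n,1}$, and Theorem~\ref{ISqmc} then yields the lattice factor $O(N^{-\min(\gamma_{n,1},1)+\epsilon})$ as soon as $\gamma_{n,1}>1/2$. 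Pairing this with the norm estimate $\|\gis\|_{\mathcal{F}}=O(n^{s/4})$ from the first part of Theorem~\ref{thm9} gives the claimed $C_{\epsilon,s}\,n^{s/4}N^{-\min(\gamma_{n,1},1)+\epsilon}$; one only has to observe that the constants in Theorem~\ref{thm9} and in the CBC construction do not depend on $n$, so they can be absorbed into $C_{\epsilon,s}$.

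The second bound ($\mu=\mu^*$, $\Sigma=(m/n)\Sigma^*$) is where care is needed, because $\Sigma$ now depends on $n$ and the crude scalar growth rate used above becomes too lossy. Here I would not invoke Theorem~\ref{ISqmc} verbatim but instead revisit its quadratic-form criterion using the Laplace decomposition already exploited in Theorem~\ref{thm9}: writing $\gis(z)=(2\pi)^{s/2}|\Sigma|^{1/2}g_2(x)\exp(-n\Psi_1(x))$ with $x=\mu^*+Lz$, $g_2=f\pi_0$, and $\Psi_1(x)=\Psi(x)-\tfrac1{2m}\|x-\mu^*\|_{\Sigma^*}^2$, I would track the exponent of the squared integrand times the weight $\psi(z)^2$. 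Using $\|x-\mu^*\|_\Sigma^2=\|z\|^2$, the lower bound $\Psi(x)\ge(\delta/2)\|x-\mu^*\|_{\Sigma^*}^2$ from Assumption~\ref{5}, and the $O(1/n)$ growth that $f$, $\pi_0$ and the Faa di Bruno factors $D^\beta\Psi_1$ each contribute after the rescaling $x-\mu^*=Lz$, the $\|z\|^2$-coefficient of the exponent reduces to the negative-definiteness requirement whose admissibility (jointly with $\alpha>2$) is precisely $\gamma_{n,2}>1/2$. Multiplying the resulting lattice factor $O(N^{-\min(\gamma_{n,2},1)+\epsilon})$ by the norm bound $\|\gis\|_{\mathcal{F}}=O(n^{-s/2})$ from the second part of Theorem~\ref{thm9} delivers the stated estimate.

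The main obstacle I anticipate is the case-2 bookkeeping: one must separate the $O(n)$ part of the exponent (governed by $\delta$, $m$, and the comparison between $\Sigma^{-1}=(n/m)(\Sigma^*)^{-1}$ and $\delta n(\Sigma^*)^{-1}$) from the $O(1)$ and $O(1/n)$ parts (which bring in $\lambda_{\max}(\Sigma_0)$, $M_f$, and $M_\Psi$), and then convert the resulting condition on the quadratic form into the single scalar inequality defining $\gamma_{n,2}$ by bounding the relevant Rayleigh quotients through $\lambda_{\min}$ and $\lambda_{\max}$. A secondary but essential point is uniformity: since the generating vector is produced by a CBC search whose error constant is controlled by Theorem~\ref{thm2.2} uniformly over admissible $\alpha$, and since the $n$-growth of the integrand has been fully absorbed into $\|\gis\|_{\mathcal{F}}$, the constant $C_{\epsilon,s}$ can be taken to depend only on $\epsilon$ and $s$, as claimed.
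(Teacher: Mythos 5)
Your proposal is correct and takes essentially the same approach as the paper: the same factorization of the RMSE into an $N$-dependent CBC lattice factor times $\|\gis\|_{\mathcal{F}}$, the same derivation of the thresholds from the integrability constraint on the weight parameter $\alpha$ (your case-1 shortcut through Theorem~\ref{thm:g0} and Theorem~\ref{ISqmc} reproduces exactly the inequality the paper obtains by bounding $Q_1$ directly, including the sign $-2(M_f+sM_\Psi)\lambda_{\min}(\Sigma)$ that appears in the paper's own proof of $\gamma_{n,1}$), and the same use of Theorem~\ref{thm9} for the $n^{s/4}$ and $n^{-s/2}$ norm scalings. The only step to make explicit in a full write-up is that citing Theorem~\ref{thm9} requires verifying its hypothesis, Assumption~\ref{Convex} --- in particular the absolute convergence of $J_n$ for the relevant $Q_1$ (resp.\ $Q_2$) and $F$, into which $\alpha$ enters --- but this check is precisely the computation your exponent derivation performs, and it is what the paper's proof writes out in full.
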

\begin{proof}

We keep using the notations in Theorem~\ref{thm9}. First, we consider the case of $\mu$ and $\Sigma$ independently of $n$. Note that $D^c g_1(z)$
    is a sum of finite terms of the form
\begin{align*}
    \prod_{i=1}^s z_i^{t_i}W(z)D^a f(z).
\end{align*}
As a result, $Q_1(z)$ is a sum of finite terms of the form
$$T_1(z) = \exp\{-\norm{z-\mu}_{\Sigma}^2/\alpha\}W(z)^2\left(\prod_{i=1}^s z_i^{2t_i}\right)D^a f(z)D^b f(z)\left(\prod_{ \beta\in  P}D^{\beta}\Psi(z)\right)^2.$$
Since $f(z)$ and $\Psi(z)$ satisfy  Assumption~\ref{assum3s} with  $M=M_f$ and  $M=M_\Psi$, we have
$$|T_1(z)|\lesssim  \prod_{i=1}^s |z_i|^{2t_i}\exp\{(1-1/\alpha)\norm{z-\mu}_{\Sigma}^2 - \norm{z-\mu_0}_{\Sigma_0}^2+2(M_f+sM_\Psi)\norm{z}^2\}.$$
To satisfy Assumption \ref{Convex}  with $F(z)=2\Psi(z)$ and $Q(z)=Q_1(z)$, it suffices to verify
$$\int_{\R^s}\exp\{-2n\Psi(z)\}|T_1(z)|dz<\infty,\ \forall n>0.$$
Under Assumption~\ref{5}, we have
\begin{align*}
\int_{\R^s}\exp\{-2n\Psi(z)\}|T(z)|dz
&\le \int_{\R^s}\prod_{i=1}^s |z_i|^{2t_i}\exp\{-n\delta\norm{z-\mu^*}_{\Sigma^*}^2+(1-1/\alpha)\norm{z-\mu}_{\Sigma}^2\\
&\quad\quad\quad - \norm{z-\mu_0}_{\Sigma_0}^2+2(M_f+sM_\Psi)\norm{z}^2\}dz,
\end{align*}
which is bounded if 
$$-n\delta/\lambda_{\max}(\Sigma^*)+(1-1/\alpha)/\lambda_{\min}(\Sigma)-1/\lambda_{\max}(\Sigma_0)+2(M_f+sM_\Psi)<0,$$
or equivalently,
$$1-1/\alpha<\frac{n\delta\lambda_{\min}(\Sigma)}{\lambda_{\max}(\Sigma^*)}+\frac{\lambda_{\min}(\Sigma)}{\lambda_{\max}(\Sigma_0)}-2(M_f+sM_\Psi)\lambda_{\min}(\Sigma):=\gamma_{n,1}.$$
Applying Theorem \ref{thm2.2} with $\lambda=r-\epsilon/2=1-1/\alpha-\epsilon/2$ and $1-1/\alpha=\min(1,\gamma_{n,1})-\epsilon/2$ completes the first part of the proof.

Now we assume $\mu=\mu^*$ and $\Sigma=(m/n)\Sigma^*$. Similarly, due to $\pi_0(z)\propto \exp\{-(1/2)\norm{z-\mu_0}^2_{\Sigma_0}\}$, $D^c g_2(z)$
    is a sum of finite terms of the form
\begin{align*}
    \prod_{i=1}^s z_i^{t_i}\pi_0(z)D^a f(z).
\end{align*}
As a result, $Q_2(z)$ is a sum of finite terms of the form
$$T_2(z) = \left(\prod_{i=1}^s z_i^{2t_i}\right)\exp\{-\norm{z-\mu_0}^2_{\Sigma_0}\}D^a f(z)D^b f(z)\left(\prod_{ \beta\in  P}D^{\beta}\Psi_1(z)\right)^2.$$
Note that $$D^a\Psi_1(z)=D^a\Psi(z)+(1/2)D^a(\norm{z-\mu^*}_{\Sigma^*}^2).$$
Since $D^a\Psi(x)=O(\exp\{M_\Psi\norm{z}^2\})$, $D^a\Psi_1(z)=O(\exp\{M_\Psi\norm{z}^2\})$, implying   Assumption~\ref{assum3s} holds also for $\Psi_1(x)$ with  $M=M_\Psi$.
We thus have
$$|T_2(z)|\lesssim  \prod_{i=1}^s |z_i|^{2t_i}\exp\{-\norm{z-\mu_0}_{\Sigma_0}^2 +2(M_f+sM_\Psi)\norm{z}^2\}.$$
To satisfy Assumption \ref{Convex}  with $F(z)=2\Psi(z)-(1/m-1/\alpha)\norm{z-\mu^*}_{\Sigma^*}^2$ and $Q(z)=Q_2(z)$, it suffices to verify
$$\int_{\R^s}\exp\{-2nF(z)\}|T_2(z)|dz<\infty,\ \forall n>0.$$
Under Assumption~\ref{5}, we have
\begin{align*}
\int_{\R^s}\exp\{-2nF(z)\}|T_2(z)|dz
&\le \int_{\R^s}\prod_{i=1}^s |z_i|^{2t_i}\exp\{-n(\delta-1/m+1/\alpha)\norm{z-\mu^*}_{\Sigma^*}^2\\
&\quad\quad\quad - \norm{z-\mu_0}_{\Sigma_0}^2+2(M_f+sM_\Psi)\norm{z}^2\}dz.
\end{align*}
It thus suffices to ensure that
$$-n(\delta-1/m+1/\alpha)/\lambda_{\max}(\Sigma^*)-1/\lambda_{\max}(\Sigma_0)+2(M_f+sM_\Psi)<0,$$
or equivalently,
$$1-1/\alpha<1+\delta-1/m+\frac{\lambda_{\max}(\Sigma^*)}{n\lambda_{\max}(\Sigma_0)}-\frac{2\lambda_{\max}(\Sigma^*)(M_f+sM_\Psi)}{n}=:\gamma_{n,2}.$$
Applying Theorem \ref{thm2.2} with $\lambda=r-\epsilon/2=1-1/\alpha-\epsilon/2$ and $1-1/\alpha=\min(1,\gamma_{n,2})-\epsilon/2$ completes the second part of proof.
\end{proof}

\begin{remark}\label{rem:rate}
   As $\gamma_{n,1} \to \infty$ when $n \to \infty$ and $\delta > 0$, both PriorIS and ODIS methods have RMSEs of $O(n^{s/4}N^{-1+\epsilon})$. In contrast, we have $\gamma_{n,2} \to 1 + \delta - 1/m$ as $n \to \infty$. To ensure $1 + \delta - 1/m > 1/2$, it is necessary that $m > 1/(1/2 + \delta)$. 
   In the case of LapIS, where $m=1$, the RMSE approaches $O(n^{-s/2}N^{-\delta+\epsilon})$ for large $n$ and $\delta > 1/2$. If the exact value of $\delta$ is unknown, a conservative approach is to choose $m>2$ (e.g., $\Sigma = \frac{4}{n}\Sigma^*$), excluding LapIS. For known $\delta > 0$, we suggest setting $\mu=\mu^*$ and $\Sigma = \frac{1}{\delta n}\Sigma^*$ (i.e., $m = 1/\delta$), yielding an RMSE close to $O(n^{-s/2}N^{-1+\epsilon})$. We refer to this adjusted method of IS as NewIS in subsequent discussions.
\end{remark}

\begin{remark}
   In accordance with Theorem~\ref{thm1}, the estimand $I_{s,\phi}(\gis)$ is of the order $O(n^{-s/2})$ as $n \rightarrow \infty$. Consequently, it is more appropriate to consider the relative RMSE (RRMSE) defined as
   \begin{equation*}
   \mathrm{RRMSE} := \frac{\sqrt{E^\Delta |I_{s,\phi}(\gis)-\hat I_{s}(\gis \circ \Phi^{-1})|^2}}{I_{s,\phi}(\gis)}
   \end{equation*}
rather than RMSE when investigating the asymptotic behavior with respect to $n$.
   By Corollary~\ref{cor:rates}, the RRMSE 
   for ODIS and PriorIS is  $O(n^{3s/4})$, while the  RRMSE for LapIS and NewIS is  $O(1)$ as $n\to \infty$. From this point of view, the later case is robust with respect to the noise level $n$. The results can also be applied for the ratio estimator \eqref{31} by using \eqref{eq:boundedratio}. 
\end{remark}


\section{Results for $t$-distribution proposals}\label{sec:tdist}
It is well known that the tail of Gaussian distribution are short and sometime it will make the likelihood function more severe. 
In this section, we employ a proposal that results from a linear transformation of i.i.d. $t$-distributions with the degree of freedom $\nu>0$. It has the effect of changing the density $\phi(t)$ from standard Gaussian distribution $N(0,1)$ in Section~\ref{sec:gaussian} to $t$-distribution $t_\nu$. We now  work on the RKHS $\mathcal{F}$ with the pairing  
\begin{align*}
    \phi(t) &=\frac{\Gamma(\frac{\nu+1}{2})}{\sqrt{\nu\pi}\Gamma(\frac{\nu}{2})}\left(1+\frac{t^2}{\nu}\right)^{-\frac{\nu+1}{2}},\\
    \psi(t) &=(1+|t|)^{-\alpha},
\end{align*}
where $2\alpha+1<\nu$. For a positive definite matrix $\Sigma$, the proposal for IS is
$$q(x) = |\Sigma|^{-1/2}\phi(L^{-1}(x-\mu)),$$ where $LL^T=\Sigma$. The numerator in \eqref{3} is rewritten as $\mathbb{E}[\gtis(Z)]$
where $Z_i\simiid t_v$,
$$\gtis(z) =G_0 (x)W_t(x)\text{ with }x=\mu+Lz,$$
and the likelihood function
\begin{equation}
\begin{aligned}\label{ratio}
    W_t(x) &=\frac{\pi_0(x)}{q(x)}=\frac{|\Sigma|^{1/2}}{(2\pi)^{s/2}|\Sigma_0|^{1/2}}\exp\left\{-\frac{1}{2}\norm{\mu +Lz-\mu_0}_{\Sigma_0}^2\right\}\prod_{i=1}^{s}\frac{\sqrt{\nu\pi}\Gamma(\frac{\nu}{2})}{\Gamma(\frac{\nu+1}{2})}\left(1+\frac{z_i^2}{\nu}\right)^{\frac{\nu+1}{2}}.
\end{aligned} 
\end{equation}

\begin{theorem}\label{thm:5.3}
    Let $\gtis(z)=G_0(\mu+Lz)W_t(\mu+Lz)$ with $W_t(x)$ given by \eqref{ratio} and $\Sigma=LL^T$. Suppose that $G_0(z)$ satisfies Assumption~\ref{assum3s} with the growth rate $M\in\R$. If 
\begin{equation}\label{eq:Mcond}
   \frac{1}{\lambda_{\max}(\Sigma_0)}-2M > 0, 
\end{equation}
then a randomly shifted lattice rule using in the estimator $\hat{I}_{s}(\gtis\circ\Phi^{-1})$ with $N$ points can be constructed by a CBC algorithm with an RMSE of $O(N^{-1+ \frac{1}{2\nu}+ \epsilon})$ for arbitrary small $\epsilon > 0$,  where $\Phi(t)$ denotes the CDF of the $t$-distribution $t_v$.
\end{theorem}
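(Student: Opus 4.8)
The plan is to mirror the proof of Theorem~\ref{ISqmc}, replacing the Gaussian likelihood ratio $W$ by the heavier-tailed $W_t$ and exploiting the fact that condition \eqref{eq:Mcond} alone forces genuine Gaussian decay. First I would expand $\partial^u\gtis(z)$ by the Faa di Bruno formula. Writing $\gtis(z)=G_0(x)W_t(x)$ with $x=\mu+Lz$, and noting from \eqref{ratio} that $W_t$ is the product of a Gaussian factor $\exp\{-\tfrac12\norm{x-\mu_0}_{\Sigma_0}^2\}$ with the $t$-tail factors $\prod_{i=1}^s(1+z_i^2/\nu)^{(\nu+1)/2}$, each $\partial^u\gtis(z)$ is a finite sum of terms
\[
 T(z)=p(z)\,D^{a}G_0(x)\,W_t(x),\qquad a\in\mbn_0^s,\ |a|\le s,
\]
where $p(z)$ is a ratio of polynomials arising from differentiating the $t$-tail factors and the quadratic in the Gaussian part, so that $|p(z)|$ grows at most polynomially in $z$.

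Next I would bound $|T(z)|$. By Assumption~\ref{assum3s}, $|D^{a}G_0(x)|\lesssim\exp\{M\norm{x}^2\}$, and inserting \eqref{ratio} gives
\[
 |T(z)|\lesssim|p(z)|\exp\left\{M\norm{x}^2-\tfrac12\norm{x-\mu_0}_{\Sigma_0}^2\right\}\prod_{i=1}^s(1+z_i^2/\nu)^{(\nu+1)/2}.
\]
The essential feature is that the Gaussian-scale part of this bound is governed by the quadratic form with matrix $MI_s-\tfrac12\Sigma_0^{-1}$; the proposal covariance $\Sigma$ enters only through the constant and the polynomial factor $p$, which is precisely why the hypothesis constrains only $M$ and $\Sigma_0$ and not $\Sigma$. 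This is the structural contrast with the Gaussian-proposal case, where the fat tails of $W$ forced the interplay between $\Sigma$ and $\Sigma_0$ recorded in \eqref{eq:gammacond}.

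Then I would establish $\|\gtis\|_{\mathcal{F}}<\infty$ by bounding, for each term, the summand
\[
 S_u=\int_{\R^{|u|}}\left(\int_{\R^{s-|u|}}\partial^u\gtis(z)\,\phi_{-u}(z)\,dz_{-u}\right)^2\psi_u^2(z)\,dz_u.
\]
Applying the Jensen/Cauchy--Schwarz bound $(\int g\,\phi_{-u})^2\le\int g^2\phi_{-u}$ reduces $S_u$ to $\int_{\R^s}T(z)^2\phi_{-u}(z)\psi_u^2(z)\,dz$. After squaring, the Gaussian-scale factor becomes $\exp\{2M\norm{x}^2-\norm{x-\mu_0}_{\Sigma_0}^2\}$, whose leading coefficient $2M-1/\lambda_{\max}(\Sigma_0)$ is negative exactly by \eqref{eq:Mcond}; since $x=\mu+Lz$ with $L$ invertible, this is genuine Gaussian decay in $z$. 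The residual tail and weight factors contribute only polynomial growth: for $j\notin u$ a leftover $(1+z_j^2/\nu)^{(\nu+1)/2}$ after partial cancellation with $\phi_{-u}\propto(1+z_j^2/\nu)^{-(\nu+1)/2}$, and for $j\in u$ the factor $(1+z_j^2/\nu)^{\nu+1}(1+|z_j|)^{-2\alpha}$ from $\psi_u^2$. All such polynomial growth is absorbed by the Gaussian decay, so $S_u<\infty$ for every admissible $\alpha\in(0,(\nu-1)/2)$, whence $\|\gtis\|_{\mathcal{F}}<\infty$.

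Finally I would invoke Theorem~\ref{thm2.2} with the pairing $(\phi,\psi)=(t_\nu,(1+|t|)^{-\alpha})$, for which Table~\ref{table1} gives $r=1-\tfrac{2\alpha+1}{2\nu}$, yielding an RMSE of $O(N^{-r+\epsilon})=O(N^{-1+\frac{2\alpha+1}{2\nu}+\epsilon})$. Since RKHS membership holds for arbitrarily small $\alpha>0$, letting $\alpha\to0^+$ and absorbing the residual $\tfrac{\alpha}{\nu}$ into $\epsilon$ gives the stated rate $O(N^{-1+\frac{1}{2\nu}+\epsilon})$. The main obstacle I anticipate is the Faa di Bruno bookkeeping: pinning down precisely the polynomial factors $p(z)$ generated by repeatedly differentiating both the $t$-tail and the Gaussian pieces of $W_t$, and then verifying uniformly over all terms and all subsets $u$ that their combined polynomial growth is dominated by the Gaussian decay furnished by \eqref{eq:Mcond}.
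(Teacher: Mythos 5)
Your proposal follows essentially the same route as the paper's proof: a Faa di Bruno expansion of $\partial^u \gtis(z)$ into terms of the form (polynomial factor)$\times D^a G_0(x)\times$(Gaussian factor)$\times$($t$-tail factors), the observation that condition \eqref{eq:Mcond} makes the quadratic form $2M\norm{x}^2-\norm{x-\mu_0}_{\Sigma_0}^2$ negative definite so that Gaussian decay swallows all polynomial growth (giving $\norm{\gtis}_{\mathcal{F}}<\infty$ for every admissible $\alpha\in(0,(\nu-1)/2)$, with $\Sigma$ playing no role in the condition), followed by Theorem~\ref{thm2.2} with $\alpha$ taken proportional to $\epsilon$ (the paper sets $\alpha=\nu\epsilon/2$) to land on the rate $O(N^{-1+\frac{1}{2\nu}+\epsilon})$. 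Your explicit Jensen/Cauchy--Schwarz reduction of $S_u$ and the bookkeeping of cancellations against $\phi_{-u}$ is simply a more carefully written version of the paper's terse claim that the bounded (in fact Gaussian-decaying) integrand makes each norm term finite; the substance is identical.
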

\begin{proof}
Recall that $x=\mu+Lz$. By the Faa di Bruno formula, and the form of $W_t(x)$ given by \eqref{ratio}, $\partial^u \gtis(z)$ is a sum of finite terms of the form
\begin{align*}
    \tilde{T}(z) = \prod_{i=1}^s z_i^{t_i}\left(1+\frac{z_i^2}{\nu}\right)^{t_i^{'}}D^a G_0(x)\exp\left(-\frac{1}{2}\norm{x-\mu_0}_{\Sigma_0}^2\right)
\end{align*}
for $a\in\mbn_0^s$ satisfying $|a|\le s$ and some integers $0\le t_i\le s$, $t_i^{'}\in\{\frac{\nu-1}{2},\frac{\nu+1}{2}\}$.
By Assumption~\ref{assum3} and using \eqref{W(z)}, we have
\begin{align*}
    |\tilde{T}(z)| &\lesssim\left(\prod_{i=1}^s z_i^{t_i}\left(1+\frac{z_i^2}{\nu}\right)^{t_i^{'}}\right)\exp\left\{M\|x\|^2 - \frac{1}{2}\norm{x-\mu_0}_{\Sigma_0}^2\right\}.
\end{align*}
If $2MI_s - \Sigma_0^{-1}$ is negative definite, then  $\tilde{T}(z)$ is bounded, implying
\begin{equation*}
    S := \int_{\R^u}\left(\int_{\R^{-v}}\tilde T(z)\phi_{-u}(z)dz_{-u}\right)^2\psi_u(z)^2dz_u<\infty
\end{equation*}
for any $\alpha>0$ satisfying $2\alpha +1<\nu$. 
To this end, it suffices to take
\begin{align*}
    2M -\frac 1{\lambda_{\max}(\Sigma_0)}< 0.
\end{align*}
Applying Theorem \ref{thm2.2} with $\lambda=r-\epsilon/2=1-\frac{2\alpha+1}{2\nu}-\epsilon/2$ and $\alpha=\nu\epsilon/2$ completes the proof.
\end{proof}

Note that the condition  \eqref{eq:gammacond} in Theorem \ref{ISqmc} actually implies the condition \eqref{eq:Mcond}, which does not depend on the choice of $\mu$ and $\Sigma$. 
The RMSE rate established in Theorem~\ref{thm:5.3} holds for the choices of $\mu$ and $\Sigma$ from PriorIS, ODIS, and LapIS.
Moreover, the condition \eqref{eq:Mcond} is easily satisfied for a large enough noise level $n$ as suggested by \eqref{thm:g0}.
In the LapIS case of Gaussian proposal, however, the RMSE rate  is relevant to the covariance matrix $\Sigma=n^{-1}\Sigma^*$and deteriorates as $n$ goes up. From this point of view,  one may prefer to use $t$-distributions rather than Gaussian distributions as the proposal. According to Theorem~\ref{thm9}, the concentration effect of the norm $\norm{\gtis}_{\mathcal{F}}$ can be studied similarly.

\section{Numerical Experiments}\label{sec4}
In this section, we present two examples illustrating our previous theoretical results for the combination of IS and QMC. In the context of QMC estimators, we employ a lattice rule featuring product and order dependent weight parameters accompanied by a a single random shift with uniform distribution. The generating vector implemented in this rule is procured from Dirk Nuyens's  website \url{https://bitbucket.org/dnuyens/qmc-generators/src/master/LATSEQ/exod2_base2_m20.txt}, designated as an embedded lattice rule \cite{cools2006constructing}. We point out here that this generating vector is a standard, off-the-shelf generating vector, rather than a generating vector specifically constructed for the weights implicitly defined in in Theorem \ref{thm2.2}. In practice we found this generating vector to work well, even though the convergence rates Theorem \ref{thm2.2} were not proved for this particular choice.

\subsection{A toy example}
Consider the model $\mathcal{G}(z) = Az+\tau F(z)$ as discussed in Example \ref{exam:nonlinear} with $\tau >0$, $A = I_s$ and $F(z) =(z_1e^{-z_1^2},\dots,z_se^{-z_s^2})^T$. For simplicity, we assume that $\Gamma=I_s$ and $y=0$, leading to  $$\Psi(z)=\frac{1}{2}\norm{\mathcal{G}(z)}^2=\frac{1}{2}\sum_{i=1}^sz_i^2(1+\tau e^{-z_i^2})^2.$$
It is easy to see that $\mu^*=\bm 0$ is the global minimizer of $\Psi(z)$. Since $\nabla^2\Psi(\mu^*)=(1+\tau)^2I_s$, $\Sigma^*=(\nabla^2\Psi(\mu^*))^{-1}=(1+\tau)^{-2}I_s$. Note that $\Psi(z)\ge \frac{1}{2}\norm{z}^2$. To satisfy Assumption~\ref{5}, i.e.,
$$\Psi(z)\ge\frac{\delta}2\norm{z}^2_{\Sigma^*}=\frac{\delta(1+\tau)^2}2\norm{z}^2,$$
it suffices to take $\delta= (1+\tau)^{-2}$. In our experiments, we take $\delta\in\{1/4,1/2,3/4,1\}$ and the dimension $s = 8$. For the Gaussian prior $N(\mu_0,\Sigma_0)$, we take $\mu_0=\bm 1$ and $\Sigma_0$ with entries $\max(i,j)$ for $i,j=1,\dots,s$. 
We consider four Gaussian proposals for IS:
\begin{itemize}
    \item PriorIS: $\mu = \mu_0=\bm 1$ and $\Sigma=\Sigma_0$,
    \item ODIS: $\mu = \mu^*= \bm 0$ and $\Sigma=\Sigma_0$,
    \item LapIS: $\mu =\mu^*= \bm 0$ and $\Sigma=\frac 1n\Sigma^*=\frac{\delta}{n} I_s$,
    \item NewIS: $\mu =\mu^*= \bm 0$ and $\Sigma=\frac{1}{\delta n
    }\Sigma^*=\frac 1 nI_s$.
\end{itemize}
The test function we use is $f(z)= \|z\|$. It is easy to see that Corollary \ref{cor:rates} holds with arbitrarily small $M_f,M_\Psi>0$. 

When $s=2$ and $\delta=3/4$, Figure \ref{figure22} illustrates the concentration effect of the posterior for $n\in\{1, 10^2,10^4\}$, and the resulting transformed posterior $\pi_0(\mu+Lx)\exp({-n\Psi(\mu+Lx)})$ using NewIS is also provided. 
We only provide numerical results for estimating the  numerator in \eqref{3}. Since the numerator and  denominator in \eqref{3} are of the order $O(n^{-s/2})$, the RMSEs of IS estimators for the numerator are scaled by a factor of $n^{s/2}$ for investigating the robustness of the noise level.
The RMSEs are estimated based on $40$ independent replications.

Initially, we use the four Gaussian proposals for MC and QMC methods with different values of $\delta$ to investigate the relationship between RMSE and the noise level $n$,  where the sample size is fixed at $N = 2^{14}$. The results are presented in Figure \ref{figure1}.  We observe that both LapIS and NewIS  in the MC or QMC setting enjoy the noise-level-robustness for different cases of $\delta$. However,  PriorIS and ODIS deteriorate as the noise level $n$ increases. These agree with the theoretical results in Corollary~\ref{cor:rates}.

\begin{figure}[ht]
\centering  

{%
\label{Fig.sub.21}
\includegraphics[width=0.30\textwidth]{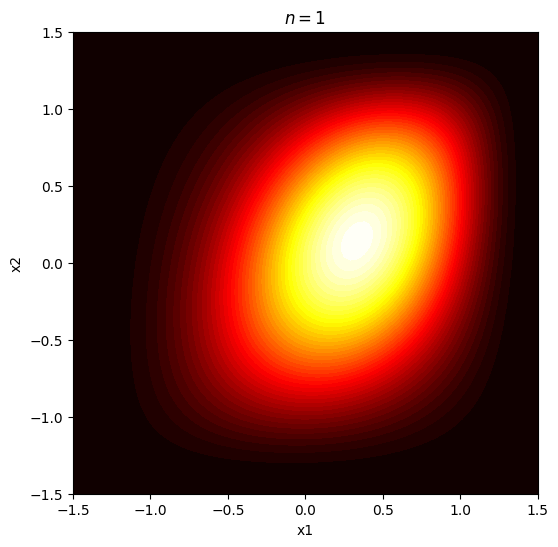}} 
{%
\label{Fig.sub.22}
\includegraphics[width=0.30\textwidth]{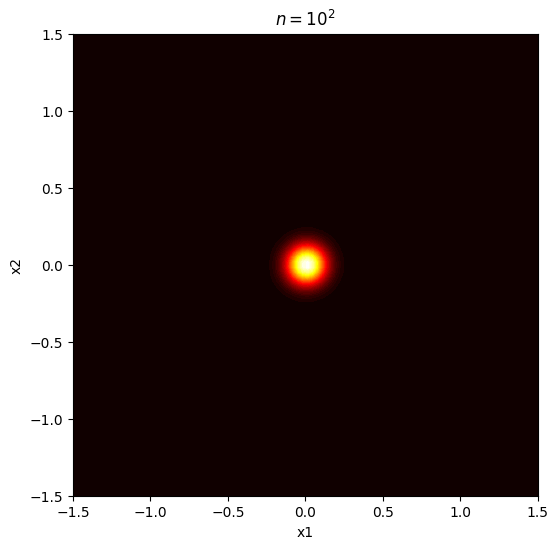}}
{%
\label{Fig.sub.23}
\includegraphics[width=0.30\textwidth]{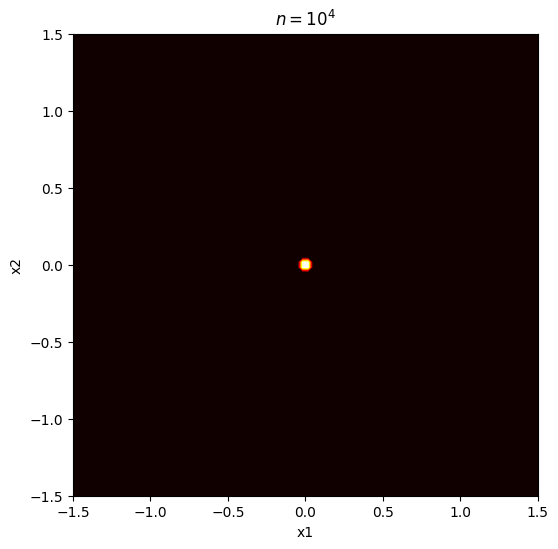}}

\vspace{0.3cm} 

{%
\label{Fig.sub.21-2}
\includegraphics[width=0.30\textwidth]{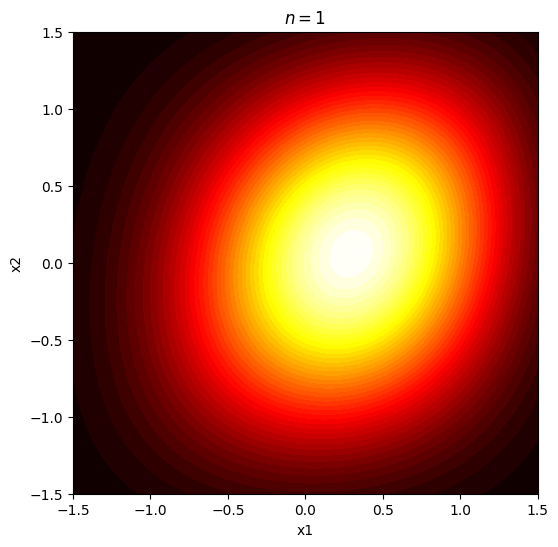}}
{%
\label{Fig.sub.22-2}
\includegraphics[width=0.30\textwidth]{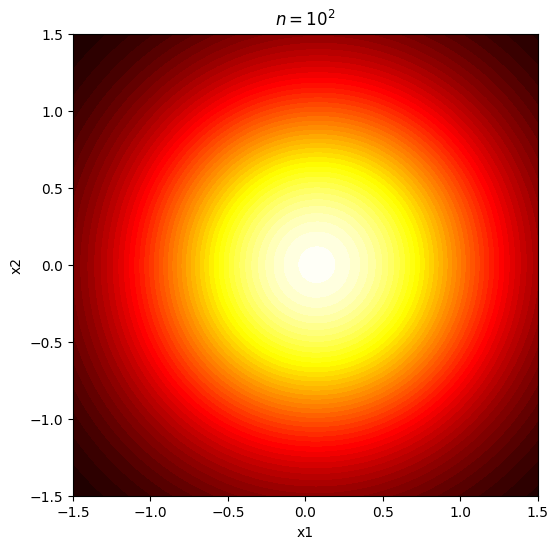}}
{%
\label{Fig.sub.23-2}
\includegraphics[width=0.30\textwidth]{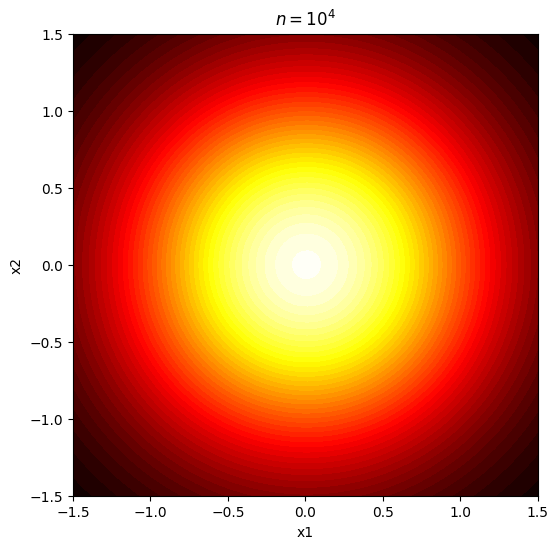}}
\caption{The first row shows the posterior distribution of the toy example for $n=1,10^2,10^4$, respectively. The second row
shows the  transformed posteriors $\pi_0(\mu+Lx)\exp({-n\Psi(\mu+Lx)})$ using NewIS. The dimension $s = 2$ and $\delta=3/4$.}\label{figure22}
\end{figure}

Subsequently, with the noise level fixed at $n=2000$, we explore the relationship between RMSE and the sample size $N$, as shown in Figure \ref{figure2}. 
We find that LapIS and NewIS contribute to a more significant enhancement in the setting of MC and QMC compared to PriorIS and ODIS. Particularly, the convergence speed of NewIS in QMC is faster than that of LapIS for $\delta<1$.  Moreover, the convergence rate of the QMC-based LapIS method improves as $\delta$ increases. This observation aligns with our theoretical results (see Remark~\ref{rem:rate}), suggesting that if we know the value of $\delta$, we can choose a more suitable Gaussian proposal  to achieve a higher order of convergence.

Finally, Figure~\ref{small_sacle} presents RMSEs for a small noise level $n=20$ and $\delta=1/4$. For this case, the posterior is far away from Gaussian distribution. We observe that LapIS performs much worse than ODIS and NewIS in QMC.
This suggests that LapIS may result in a severe unbounded integrand for QMC as discussed in Remark~\ref{rem:qmcrate}.

\begin{figure}[ht]
\centering  

{%
\label{Fig.sub.8}
\includegraphics[width=0.40\textwidth]{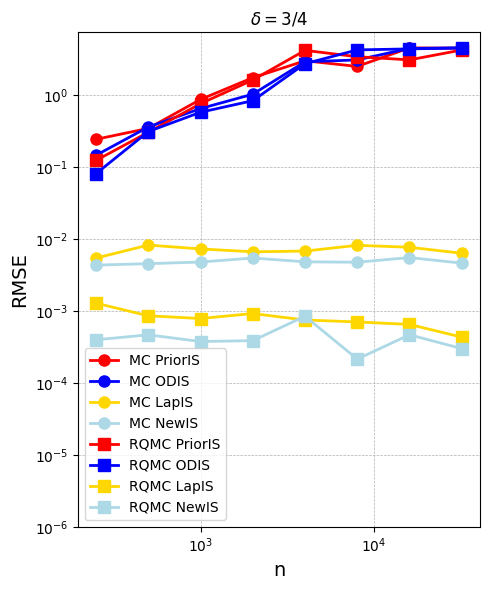}} 
{%
\label{Fig.sub.9}
\includegraphics[width=0.40\textwidth]{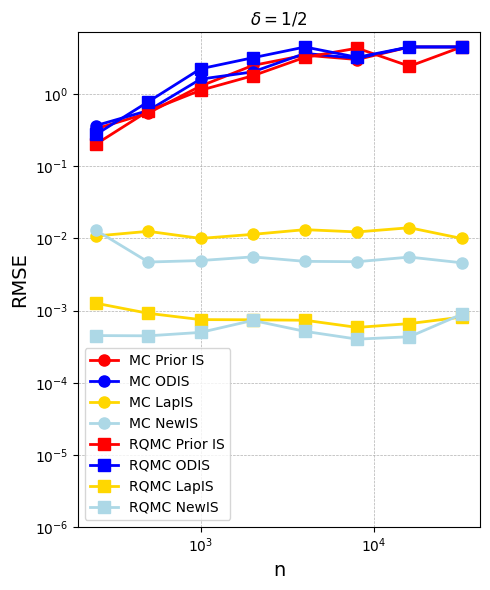}}

\vspace{0.3cm} 

{%
\label{Fig.sub.10}
\includegraphics[width=0.40\textwidth]{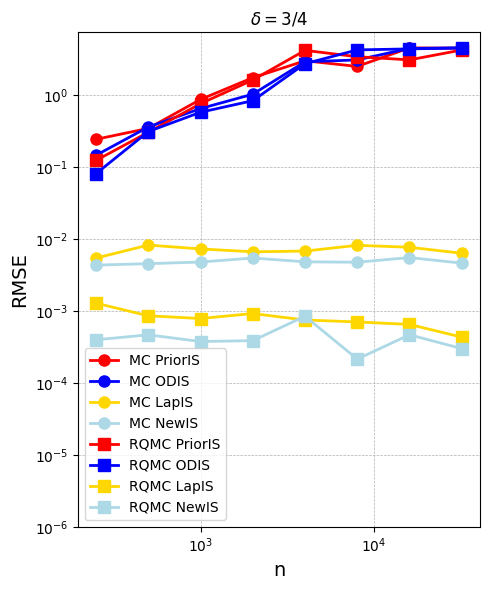}}
{%
\label{Fig.sub.11}
\includegraphics[width=0.40\textwidth]{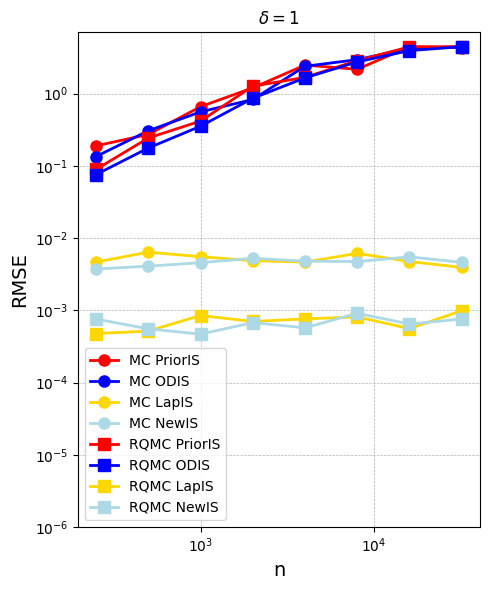}}

\caption{Scaled RMSE by a factor of $n^{s/2}$ for MC and QMC methods 
 with PriorIS, ODIS, LapIS, and NewIS for the toy model. The sample size $N=2^{14}$.}\label{figure1}
\end{figure}





\begin{figure}[ht]
\centering  

{%
\label{Fig.sub.8-2}
\includegraphics[width=0.40\textwidth]{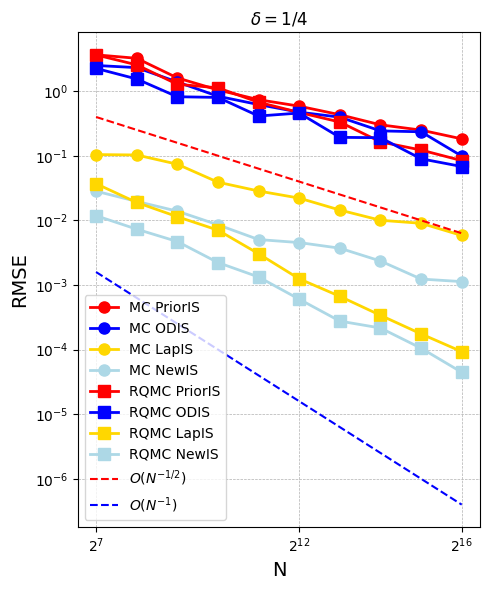}} 
{%
\label{Fig.sub.9-2}
\includegraphics[width=0.40\textwidth]{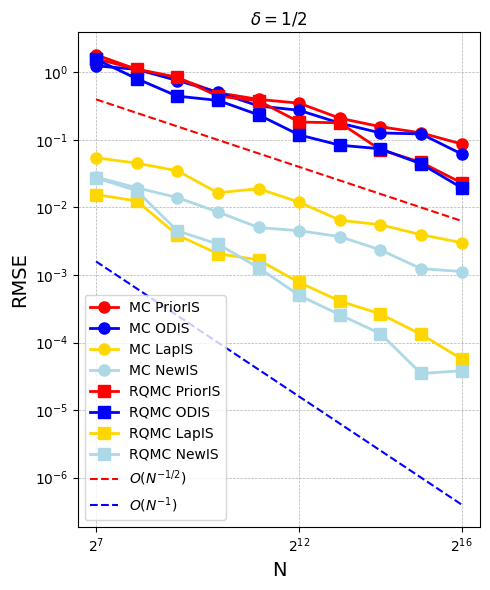}}

\vspace{0.3cm} 

{%
\label{Fig.sub.10-2}
\includegraphics[width=0.40\textwidth]{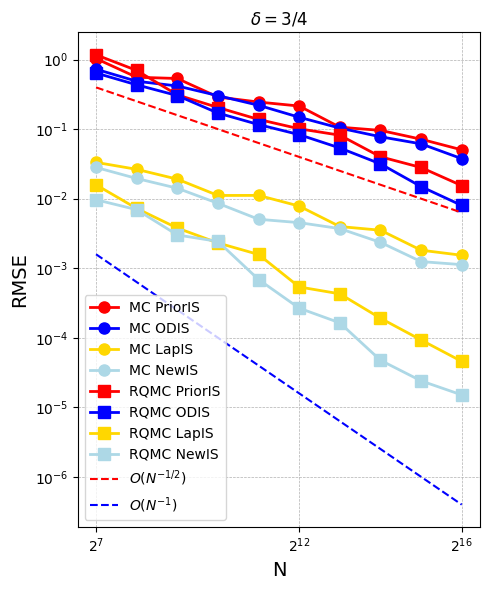}}
{%
\label{Fig.sub.11-2}
\includegraphics[width=0.40\textwidth]{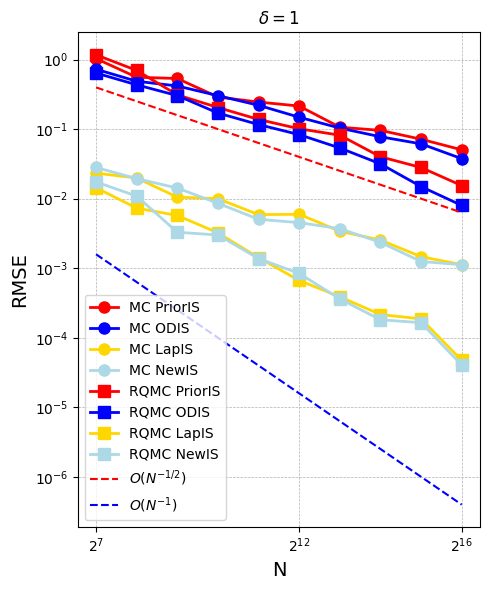}}

\caption{Scaled RMSE by a factor of $n^{s/2}$ for MC and QMC methods with PriorIS, ODIS, LapIS, and NewIS for the toy model. The noise level $n=2000$.}\label{figure2}
\end{figure}
\begin{figure}[ht]
\centering  
\includegraphics[width=0.4\textwidth]{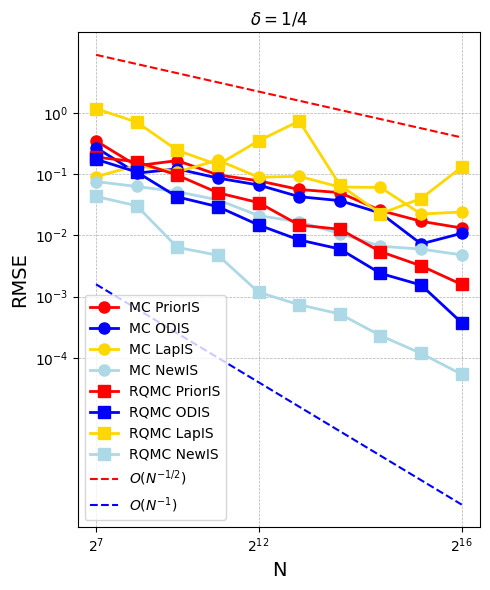} 
\caption{Scaled RMSE by a factor of $n^{s/2}$ for MC and QMC methods  with PriorIS, ODIS, LapIS, and NewIS. The noise level $n=20$ and $\delta=1/4$.}\label{small_sacle}
\end{figure}

\subsection{Parameterized PDEs}
Consider the model \eqref{eq:EPDE} with $g(x) = 100x$, $D=[0,1]$, and the diffusion coefficient
\begin{equation*}
   a(x,w) = a_s(x,z)=\exp\biggl(\sum_{j=1}^{s}z_{j}\xi_j(x)\biggr), 
\end{equation*}
where $\xi_j(x)=(0.1/j)\sin(j\pi x)$ which comes from the PDE model in \cite{bib1},
and  $z_k \in \R$ for $k = 1,...,s$, are to be inferred based on noisy observations of the solution $q$ at certain points in $[0,1]$. 
These observations are taken at $t \in \{0.125, 0.25, 0.375, 0.5, 0.625, 0.75, 0.875\}$, implying $\mathcal{G}: \mathbb{R}^s \rightarrow \mathbb{R}^7$. We assume additive Gaussian observational noise, where $\eta$ follows a normal distribution $N(0,\Gamma_n)$, with noise covariance $\Gamma_n = n^{-1}\Gamma$, where $\Gamma = I_7$.
In the following analysis, we impose a Gaussian prior $\pi_0=N(0,I_s)$ and aim to integrate with respect to the resulting posterior measure. Here we take a simple test function $f(z) = \norm{z}$ and $s = 8$ for illustration. The observational data $y$ is a solution of the PDE with the fixed the parameter, a vector of all ones. The way to solve the PDE is the finite difference method with the step equals to $1/64$. For details about the finite difference method, we refer to \cite{grossmann2007numerical}.

\begin{figure}[ht]
\centering
{
\label{Fig.sub.1}
\includegraphics[width=0.4\textwidth]{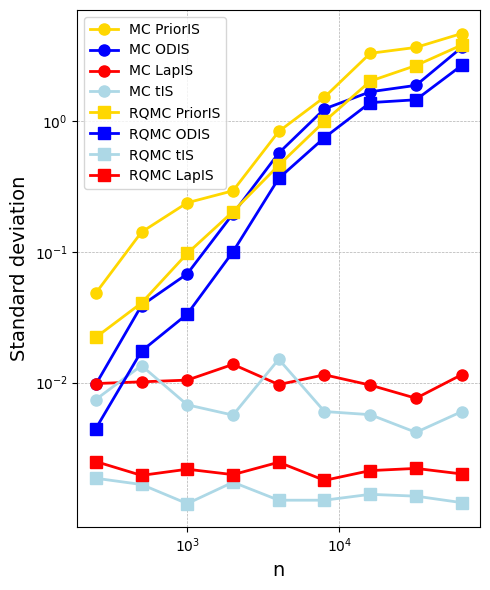}}
{
\label{Fig.sub.2}
\includegraphics[width=0.4\textwidth]{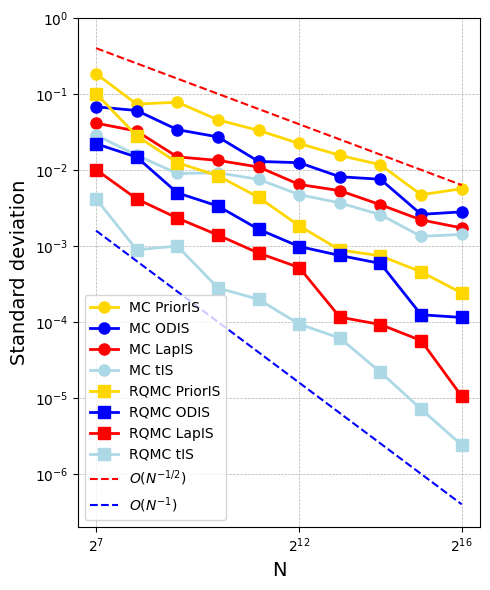}}
\caption{The left panel reports the standard deviation of the ratio estimator \eqref{31} with respect to $n$ for MC and QMC methods with PriorIS, ODIS,
LapIS, and tIS when the sample size is  fixed at $N=2^{14}$. The right panel shows the standard deviation of the ratio estimator \eqref{31} with respect to $N$ when the noise level is fixed at $n=2000$. The degrees of freedom for tIS is $\nu=5$.} \label{figure3}
\end{figure}

We apply MC and QMC with different IS methods (PriorIS, ODIS, LapIS, and tIS) to estimate the posterior expectation \eqref{2}. The tIS method is a variant of LapIS by using the $t$ distribution with the degrees of freedom $\nu=5$ instead of the standard Gaussian distribution in the sampling. The standard deviations of the ratio estimator \eqref{31} are presented in Figure~\ref{figure3} with varying $n$ or $N$. The sample size $N=2^{14}$ and $n=2000$ are fixed for the left and right panels of Figure \ref{figure3}, respectively. The results are based on 40 independent replications.   It is evident that while both tIS and LapIS exhibit robustness with respect to the noise level variability, the variance reduction achieved with the $t$-distribution as the proposal is more substantial compared to that with the Gaussian distribution. Consequently, the use of $t$-distribution in the context of inverse problems rooted in PDE scenarios continues to hold significant potential.

\section{Conclusions}\label{sec:conclusion}
    The theoretical analysis is provided for the application of QMC combined with IS in BIPs. Firstly, we analyzed some convergence results of IS combined with the lattice rule and observed that choosing an appropriate importance density can ensure that the function lies in the space of weighted functions. On the other hand, selecting a poor importance density can make the function unfriendly to QMC methods. Then we proposed an efficient IS method based on Gaussian proposal that not only enjoys the noise-level-robustness property but also has a faster convergence rate. The proposed IS depends on a prior information of the lower bound of the negative log-likelihood (i.e., Assumption~\ref{eq:remainer}). On the other hand, we found that using $t$-distribution proposals in LapIS performs better than using Gaussian proposals.
    
    It should be noted that this paper does not delve deeply into the relationships among convergence rate, dimensionality, and degrees of freedom of the $t$-distribution. The implied constants in the error bounds of this paper may still be associated with the dimensionality or the degrees of freedom of the $t$-distribution. To obtain dimension-independent results as the dimension tends to infinity, one may select some special weights involved in the RKHS. A related approach is proposed in \cite{kuo2012quasi}. The study of noise-level robustness in infinite dimensions is a direction for future research.
\section{Acknowledgments}
The authors would like to thank Professor Nicolas Chopin, Jiarui Du and Xinting Du for helpful comments and suggestions.

\bibliographystyle{siamplain}

\bibliography{references}

\end{document}